\numberwithin{equation}{section}
\theoremstyle{plain}
\newtheorem{Thm}{Theorem}[section]
\newtheorem*{Thm*}{Theorem}
\newtheorem{Lem}[Thm]{Lemma}
\newtheorem{Prop}[Thm]{Proposition}
\theoremstyle{definition}
\newtheorem{Rem}[Thm]{Remark}
\newtheorem{?}[Thm]{Problem}
\newcommand{\p}{\partial}
\newcommand{\R}{\mathbb{R}}
\newcommand{\e}{\varepsilon}
\newcommand{\px}{\partial_x}
\newcommand{\phib}{\mathring{\phi} }
\newcommand{\psib}{\mathring{\psi} }
\newcommand{\red}{\textcolor{red}}
\newcommand{\blue}{\textcolor{blue}}
\newcommand{\abs}[1]{\left\lvert#1\right\rvert}
\newcommand{\norm}[1]{\left\lVert#1\right\rVert}
\begin{document}
	
	\begin{titlepage}
		\title{Optimal decay rates to the contact wave for 1-D compressible Navier-Stokes equations}
		\author{Lingjun Liu$^{1}$}
		\author{Shu Wang$^{1}$}
		\author{Lingda Xu$^{2}$}
		
			
		\affil{\begin{center}
				\footnotesize
				 $ ^1 $ 
				 School of Mathematics, Statistics and Mechanics, Beijing University of Technology, Beijing 100124, 
				China.\\ 
				 E-mail: lingjunliu@bjut.edu.cn (L. Liu), wangshu@bjut.edu.cn (S. Wang)\\
				\vspace{0.07cm}
					 $ ^2 $ Department of Mathematics, Yau Mathematical Sciences Center, Tsinghua University, Beijing 100084, China.  \\
				 E-mail: xulingda@tsinghua.edu.cn (L. Xu)
				\vspace{0.07cm}
		\end{center}
		}

		
			
		\date{}

	\end{titlepage}
	
	\maketitle
\begin{abstract}
This paper investigates the decay rates of the contact wave in one-dimensional Navier-Stokes equations. We study two cases of perturbations, with and without zero mass condition, i.e., 
 the integration of initial perturbations is zero and non-zero, respectively. For the case without zero mass condition, we obtain the optimal decay rate $(1+t)^{-\frac{1}{2}}$ for the perturbation in $L^\infty$ norm, which provides a positive answer to the conjecture in \cite{HMX}. We applied the anti-derivative method, introducing the diffusion wave to carry the initial excess mass, diagonalizing the integrated system, and estimating the energy of perturbation in the diagonalized system. Precisely, due to the presence of diffusion waves, the decay rates for errors of perturbed system are too poor to get the optimal decay rate. 
  We find the dissipation structural in the diagonalized system, see \cref{ds}. This observation makes us able to fully utilize the fact that the sign of the derivative of the contact wave is invariant and 
  to control the terms with poor decay rates in energy estimates. 
{For the case with zero mass condition, there are also terms with poor decay rates.} In this case, 
note that there is a cancellation in the linearly degenerate field so that the terms with poor decay rates will not appear in the second equation of the diagonalized system. Thanks to this cancellation and a Poincar\'e type of estimate obtained by a critical inequality introduced by \cite{HLM}, we get the decay rate of $\ln^{\frac{1}{2}} (2+t)$ for $L^2$ norm of anti-derivatives of perturbation and $(1+t)^{-\frac{1}{2}}\ln^{\frac{1}{2}}(2+t)$ for the $L^2$ norm of perturbation itself, the decay rates are 
 optimal, which is consistent with the results obtained by using pointwise estimate in \cite{XZ} for the system with artificial viscosity. 

\end{abstract}
\

{\bf Keywords:} {compressible Navier-Stokes equations; contact wave; optimal decay rate; nonlinear stability }\\
\

\textbf{Mathematics Subject Classification.}  35L67; 35Q30; 76L05
	
  \maketitle
\section{Introduction}
We study the 1-D compressible Navier-Stokes system by
Lagrangian coordinate, which reads,
\begin{align}\label{1.1}
\begin{cases}
v_t-u_x=0, &\\
u_t+p_x=\mu(\frac{u_x}{v})_x, &t>0,\ \ x\in\mathbb R,\\
(e+\frac{u^2}{2})_t+(pu)_x=(\kappa\frac{\theta_x}{v}+\mu\frac{uu_x}{v})_x, &
\end{cases}
\end{align}
where$\ v(x, t)>0, u(x, t), e(x, t)>0, \theta(x, t)>0\ $and$\ p(x, t)$ denote the specific volume, fluid velocity, internal energy, absolute temperature, and pressure, respectively. The positive constants $\mu$ and $\kappa$ are respectively
the viscosity and heat conduction coefficients. The state equations are given as,
\begin{align}\label{assum}
p=R\frac{\theta}{v}=Av^{-\gamma}e^{\frac{\gamma-1}{R}s},\quad e=\frac{R\theta}{\gamma-1}+\text{const},
\end{align}
where $s$ is the entropy, $A>0, R>0$ are constants and $\gamma>1$ is the adiabatic exponent. If $\mu=\kappa=0$, \cref{1.1} becomes a well-known hyperbolic system of conservation laws named compressible Euler system 
\begin{align}\label{Eu}
	\begin{cases}
		v_t-u_x=0, &\\
		u_t+p_x=0, &t>0,\ \ \ x\in\mathbb R,\\
		(e+\frac{u^2}{2})_t+(pu)_x=0, &
	\end{cases}
\end{align}
which may develop singularities no matter how smooth and small the initial data is. Thus, studying the basic wave patterns, shock wave, rarefaction wave and contact discontinuity is important to understand both the local and large-time behavior of solutions for the Euler system. In this paper, since we investigate system \cref{1.1} with viscosity effect, the viscous version of these wave patterns should be considered correspondingly. 

There are fruitful results in this field, for viscous shock wave, we refer to \cite{G,MN} for stability with initial zero mass condition, \cite{HM,SX} for stability without this condition, \cite{HH,MW,VY} for shock with large amplitude, \cite{HRZ,MZ,ZH} for the using of spectrum analysis and \cite{Liu3,LZ3,LZ5} for pointwise estimates of shock wave, while for rarefaction wave, we refer to \cite{MN2,MN3,NYZ}. For the case of periodic perturbations, we refer to \cite{HXY2022}. {Recently, the time-decay rate toward the viscous shock wave for scalar viscous conservation law is obtained in \cite{HuangXu} by an $L^p$ estimate and the area inequality.} There are still many interesting results not been listed here, since this paper focuses on the decay rate of the contact wave.

The contact discontinuity is generated in the linear degenerate characteristic field and is somehow more subtle. Actually, \cite{X} pointed out that the contact discontinuity can not be an asymptotic attractor for \cref{1.1} and \cite{LX} constructed a viscous profile for the system with artificial viscosity, named contact wave, which approaches to contact discontinuity as the viscosity vanishes in $L^p$ norm $p\geq1$ on any finite-time interval. Moreover, \cite{LX} obtained the pointwise estimates for contact wave and this result was extended by \cite{XZ} to the more general systems with artificial viscosity. This kind of stability was named meta-stability. {For the 
 Navier-Stokes equations}
, \cite{HMS} proved the stability of contact wave for a free boundary problem of \cref{1.1} by applying a Poincar\'e
-type inequality. Later, for the case with zero mass condition, by a key observation that although the energy of anti-derivatives of perturbation is growth at the rate of $(1+t)^{\frac{1}{4}}$, it can be compensated by the decay in the energy norm of the perturbation itself 
which is of the order of 
 $(1+t)^{-\frac{1}{4}}$, then in \cite{HMX} 
  the a priori assumption was closed and the stability result was proved. This result was extended by \cite{HXY} to the general perturbation without zero mass condition. 
It is 
{worth mentioning} the result of \cite{HLM}, which proved the stability of the contact wave without applying the anti-derivative technique and 
 obtained the stability of a composite wave of rarefaction waves and a contact wave. The key of proof in \cite{HLM} is the introduction of an inequality for heat kernel. 

In this paper, we are concerned with the optimal decay rates. 
To be more clear, \cite{HMX} and \cite{HXY} studied the decay rate with and without zero mass condition, both of their decay rates are $(1+t)^{-\frac{1}{4}}$ for $L^\infty$ norm. 
In fact, 
 \cite{LX,XZ} studied the system with artificial viscosity {by using pointwise estimate}, from their results, it can be expected that the optimal decay rate {of the perturbation without zero mass condition}
  is $(1+t)^{-\frac{1}{2}}$ for Navier-Stokes equations;  for the 
  perturbation with zero mass condition, the optimal decay rate is 
 $\ln^{\frac{1}{2}} (2+t)$ for $L^2$ norm of anti-derivatives of perturbation and $(1+t)^{-\frac{1}{2}}\ln^{\frac{1}{2}}(2+t)$ for the $L^2$ norm of perturbation itself. 
 This conjecture was proposed by \cite{HMX} and remains unsolved. Note that this problem is non-trivial since the contact wave is only an asymptotic solution for Navier-Stokes equations, while ansatz in \cite{LX,XZ} is the exact solution to the systems they studied. In fact, there are attempts to improve the decay rate in \cite{HMX} with the zero mass condition. For example, \cite{HWW} obtained the decay rate of $(1+t)^{-\frac{5}{8}+\nu}$, where $\nu>0$ is a sufficiantly small constant. Later, \cite{Yang1} applied an estimate on heat kernel introduced by \cite{HLM} to improve the decay rate to $(1+t)^{-\frac{5}{8}}\ln^{\frac{1}{2}}({2+}t)$. 
 \cite{Yang2} used 
 {an idea similar to \cite{HWW}'s} to study the non-zero mass case and improve the decay rate to $(1+t)^{-\frac{3}{8}}$. {However, the optimal decay rates under both zero mass and non-zero mass conditions on the perturbations have still not been achieved.}

{Thus, new ideas should be applied to obtain the optimal decay rate.} 
 We shall study this problem with an anti-derivative technique. For 
 {the compressible Navier-Stokes system \eqref{1.1} under perturbation without zero mass condition}, we first introduce the diffusion waves to carry the initial excess masses of perturbations as \cite{HXY}. Secondly, we diagonalize the integrated system in order to make full use of the fact that the sign of the spatial derivative of the contact wave is invariant. In the diagonalized system, we successfully achieve better control of terms with poor decay rates. 
 {Next, a more accurate a priori assumption is needed in the higher-order derivatives estimates.} 
 Finally, we make full use of the dissipation structure for the diagonalized system. 
  Through the above methods, we successfully obtained the decay rate of $(1+t)^{-\frac{1}{2}}$. 

For the case with zero mass condition, we observed that the term with poor decay rate appears only in the equation of conservation of momentum. 
Through a cancellation in the linearly degenerate field, the terms with poor decay rates will not appear in the second equation of the diagonalized system. Thus, we use the sign invariance of the derivative of the contact wave, again controlling the terms with poor decay rates, and further we use an interesting estimate introduced by \cite{HLM} in the integrated system to obtain a Poincar\'e type inequality, we obtain the decay rate of $L^2$ of the anti-derivative is $\ln^{\frac{1}{2}}(2+t)$. 


We now introduce the viscous version of contact discontinuity. Consider the corresponding Euler equation \eqref{Eu} with the Riemann initial data
\begin{align}\label{Eui}
	(v,u,\theta)(x, 0)=\begin{cases}
		(v_-, 0, \theta_-), \text{if} \ x<0;\\
		(v_+, 0, \theta_+), \text{if} \ x>0,
	\end{cases}
\end{align}
where $v_\pm$ and $\theta_\pm$ are given positive constants. 
It is known that the contact discontinuity solution for \eqref{Eu} with \eqref{Eui}  takes the form:
\begin{align}
	(\bar{V}, \bar{U}, \bar{\Theta})(x, t)=\begin{cases}
		(v_-, 0, \theta_-), x<0, t>0;\\
		(v_+, 0, \theta_+), x>0, t>0,
	\end{cases}
\end{align}
provided that
\begin{align}
	p_-:= \frac{R\theta_-}{v_-}:= p_+:= \frac{R\theta_+}{v_+}.
\end{align}
As \cite{HLM},  we consider the nonlinear diffusion equation
\begin{align}\label{cd1}
	\hat{\Theta}_t=a\left(\frac{\hat{\Theta}_x}{\hat{\Theta}}\right)_x, \ \ \hat{\Theta}({\pm}\infty, t)=\theta_{\pm},\quad a=\frac{\kappa p_+(\gamma-1)}{\gamma R^2}>0.
\end{align}
Due to \cite{HL}, the above equation admits a unique self-similarity solution $\hat{\Theta}(x, t) = \hat{\Theta}(\xi), \xi = \frac{x}{\sqrt{1+t}}$.  Moreover,  $\hat{\Theta}(\xi)$ is a monotone function, increasing if $\theta_+>\theta_-$ and decreasing if $\theta_+<\theta_-$; there exists some positive
constant $\bar{\delta}$, such that for $\delta=|\theta_+-\theta_-|\leq\bar{\delta}, \hat{\Theta}$ satisfies:
\begin{align}\label{cde}
	(1+t)|\hat{\Theta}_{xx}|+(1+t)^{1/2}|\hat{\Theta}_x|+|\hat{\Theta}-\theta_{\pm}|\leq c_1\delta e^{-\frac{c_2x^2}{1+t}},\ \ \ \ as\ |x|\rightarrow\infty,
\end{align}
where $c_1$ and $c_2$ are both positive constants depending only on $\theta_{\pm}$. 
Then the contact discontinuity profile $(\bar{v}, \bar{u}, \bar{\theta})(x, t)$ can be defined as follows,
\begin{align}\label{c}
	\bar{v}=\frac{R}{p_+}\hat{\Theta},\quad \bar{u}=u_-+\frac{\kappa(\gamma-1)}{\gamma R}\frac{\hat{\Theta}_x}{\hat{\Theta}},\quad \bar{\theta}=\hat{\Theta}-\frac{\gamma-1}{2R}\bar{u}^2, 
\end{align}
and 
\begin{align}\label{Ep}
	\bar{E}=\frac{R}{\gamma-1}\bar{\theta}-\frac{\bar{u}^2}{2},\quad \bar{p}=\frac{R\bar{\theta}}{\bar{v}}.
\end{align}
By a direct calculation, one has 
\begin{align}\label{cd}
	\begin{cases}
		\bar{v}_t-\bar{u}_x=0,\\
		\bar{u}_t+(\frac{R\bar{\theta}}{\bar{v}})_x=\mu(\frac{\bar{u}_x}{\bar{v}})_x+R_{1x},\\
		(\bar{e}+\frac{\bar{u}^2}{2})_t+(\bar{p}\bar{u})_x=(\kappa\frac{\bar{\theta}_x}{\bar{v}}+\mu\frac{\bar{u}\bar{u}_x}{\bar{v}})_x+R_{2x},
	\end{cases}
\end{align}
where
\begin{align}
	R_1=&\left(\frac{\kappa(\gamma-1)}{\gamma R}-\mu\right)\frac{\bar{u}_x}{\bar{v}}+\bar{p}-p_+=O(1)
	 D_{-1},\label{vr}\\
	R_2=&\left(\frac{\kappa(\gamma-1)}{\gamma R}-\mu\right)\frac{\bar{u}\bar{u}_x}{\bar{v}}+\left(\bar{p}-p_+\right)\bar{u}=O(1)
	D_{-\frac{3}{2}},\label{vr2}\\
	D_{-\alpha}=&\delta(1+t)^{-\alpha}\text{exp}\{-\frac{c_2x^2}{1+t}\},\ \  \delta=\max\{|v_+-v_-|,|u_+-u_-|,|\theta_+-\theta_-|\}.
\end{align}
Since the larger error term in $R_1$ is not integrable in time due to its rate of decay of $O(\frac{1}{1+t})$, we need to overcome this main technical difficulty in this paper. 
Because the worst error term $R_1$ appears only in $\eqref{cd}_2$, 
{its effect can be controlled by the intrinsic dissipation in the nonlinear sound wave families,  and the viscosity and the heat conductivity in the compressible Navier-Stokes system \eqref{1.1}, which will be
sufficient for the energy estimates 
 for the contact wave $(\bar v,\bar u,\bar\theta)$ as we present later. }

The rest of this paper is arranged as follows. We will introduce the construction of ansatz and give the main results in section 2. Then we reformulate the system and carry out the energy estimates for the case of non-zero initial excess mass in section 3. In section 4, with the help of critical inequality and a Poincar\'e type of estimate, we show the energy estimates for the case of zero initial excess mass and get the 
decay rates.

 \textbf{Notations.} 
   We denote $\|u\|_{L^p}$ by the norm of Sobolev space $L^p(\mathbb{R})$, especially $\|\cdot\|_{L^2}:=\|\cdot\|$, $C$ and $c$ 
   by the generic positive constants.

\section{Ansatz and the main result}
{In this section, we are ready to give two main theorems about the decay rate of perturbation with and without zero mass conditions, respectively.  We first construct the ansatz for the case of non-zero initial mass. Since some additional diffusion waves in the sound wave families must be generated for general perturbations as observed in \cite{LX,X}, the assumption of non-zero initial mass is generic. }

Denote the conserved quantities by $$
m(x, t)=\left(v, u, \theta+\frac{\gamma-1}{2 R} u^2\right)^t, \quad \bar{m}(x, t)=\left(\bar{v}, \bar{u}, \bar{\theta}+\frac{\gamma-1}{2 R} \bar{u}^2\right)^t,
$$
where ()$^t$ means the transpose of the vector (). 
{For} $x= \pm \infty$, {the vectors $m$ and $\bar{m}$ are the same, that is} $m_{ \pm}=\left(v_{ \pm}, 0, \theta_{ \pm}\right)^t$. 
The integral $\int_{-\infty}^{\infty}(m(x, 0)-\bar{m}(x, 0)) d x$ may not be zero 
 {for a general initial perturbation}. 
Inspired by \cite{LX,X}, we know that the mass distributes in all characteristic fields when time evolves
. In fact, let
$$
A(v, u, \theta)=\left(\begin{array}{ccc}
	0 & -1 & 0 \\
	-\frac{p}{v} & 0 & \frac{R}{v} \\
	-\frac{(\gamma-1) p u}{R v} & \frac{\gamma-1}{R} p & \frac{(\gamma-1) u}{v}
\end{array}\right),
$$
be the Jacobi matrix of the flux $\left(-u, p, \frac{\gamma-1}{R} p u\right)^t$. It is straightforward to check that the first eigenvalue of $A\left(v_{-}, 0, \theta_{-}\right)$ is $\lambda_1^{-}=-\sqrt{\frac{\gamma p_{-}}{v_{-}}}$ with right eigenvector
$$
r_1^{-}=\left(-1, \lambda_1^{-}, \frac{\gamma-1}{R} p_{-}\right)^t \text {. }
$$
Similarly, the third eigenvalue of $A\left(v_{+}, 0, \theta_{+}\right)$ is $\lambda_3^{+}=\sqrt{\frac{\gamma p_{+}}{v_{+}}}$ with right eigenvector 
$$
r_3^{+}=\left(-1, \lambda_3^{+}, \frac{\gamma-1}{R} p_{+}\right)^t.
$$
By strict hyperbolicity, the vectors $r_1^{-}, m_{+}-m_{-}=\left(v_{+}-v_{-}, 0, \theta_{+}-\theta_{-}\right)^t$ and $r_3^{+}$are linearly independent in $\R^3$. Thus, one has 
\begin{align}\label{Z}
\int_{-\infty}^{\infty}(m(x, 0)-\bar{m}(x, 0)) d x=\bar{\theta}_1 r_1^{-}+\bar{\theta}_2\left(m_{+}-m_{-}\right)+\bar{\theta}_3 r_3^{+},
\end{align}
with some constants $\bar{\theta}_i, i=1,2,3.$
  For all $t \geqslant 0$, motived by \cite{HXY,Liu1}, we define the ansatz $\tilde{m}(x, t)$ by
$$
\tilde{m}(x, t)=\bar{m}\left(x+\bar{\theta}_2, t\right)+\bar{\theta}_1 \theta_1 r_1^{-}+\bar{\theta}_3 \theta_3 r_3^{+},
$$
where
$$
\theta_1(x, t)=\frac{1}{\sqrt{4 \pi(1+t)}} e^{-\frac{\left(x-\lambda_1^{-}(1+t)\right)^2}{4(1+t)}}, \quad \theta_3(x, t)=\frac{1}{\sqrt{4 \pi(1+t)}} e^{-\frac{\left(x-\lambda_3^{+}(1+t)\right)^2}{4(1+t)}},
$$
satisfying
$$
\theta_{1 t}+\lambda_1^{-} \theta_{1 x}=\theta_{1 x x}, \quad \theta_{3 t}+\lambda_3^{+} \theta_{3 x}=\theta_{3 x x},
$$
and $\int_{-\infty}^{\infty} \theta_i(x, t) d x=1$ for $i=1,3$
. More precisely, denote the ansatz $\tilde{m}$ as the following expression
$$
\tilde{m}(x, t)=\left(\tilde{v}, \tilde{u}, \tilde{\theta}+\frac{\gamma-1}{2 R} \tilde{u}^2\right)^t(x, t),
$$
with
\begin{align}\label{ansatz}
\begin{aligned}
	& \tilde{v}(x, t)=\bar{v}\left(x+\bar{\theta}_2, t\right)-\bar{\theta}_1 \theta_1-\bar{\theta}_3 \theta_3, \\
	& \tilde{u}(x, t)=\bar{u}\left(x+\bar{\theta}_2, t\right)+\lambda_1^{-} \bar{\theta}_1 \theta_1+\lambda_3^{+} \bar{\theta}_3 \theta_3, \\
	& \tilde{\theta}(x, t)=\bar{\theta}\left(x+\bar{\theta}_2, t\right)+\frac{\gamma-1}{2 R} \bar{u}^2\left(x+\bar{\theta}_2, t\right)+\frac{\gamma-1}{R} p_{+}\left(\bar{\theta}_1 \theta_1+\bar{\theta}_3 \theta_3\right)-\frac{\gamma-1}{2 R} \tilde{u}^2 .
\end{aligned}
\end{align}
Furthermore, by direct calculation, it holds that
\begin{align}\label{mm1}
\begin{aligned}
	& \int_{-\infty}^{\infty}(m(x, 0)-\tilde{m}(x, 0)) d x \\
	& =\int_{-\infty}^{\infty}(m(x, 0)-\bar{m}(x, 0)) d x+\int_{-\infty}^{\infty}(\bar{m}(x, 0)-\tilde{m}(x, 0)) d x \\
	& =\bar{\theta}_2\left(m_{+}-m_{-}\right)+\int_{-\infty}^{\infty}\left(\bar{m}(x, 0)-\bar{m}\left(x+\bar{\theta}_2, 0\right)\right) d x=0 .
\end{aligned}
\end{align}
Without loss of generality, we can assume that $\bar{\theta}_2=0$ from now on. The system for $\tilde{m}$ is
\begin{align}\label{sys-ansatz}
\left\{\begin{array}{l}
	\tilde{v}_t-\tilde{u}_x=\tilde{R}_{1 x}, \\
	\tilde{u}_t+\tilde{p}_x=\mu\left(\frac{\tilde{u}_x}{\tilde{v}}\right)_x+\tilde{R}_{2 x}, \\
	\left(\tilde{e}+\frac{\tilde{u}^2}{2}\right)_t+(\tilde{p} \tilde{u})_x=\kappa\left(\frac{\tilde{\theta}_x}{\tilde{v}}\right)_x+\left(\frac{\mu}{\tilde{v}} \tilde{u} \tilde{u}_x\right)_x+\tilde{R}_{3 x},
\end{array}\right.
\end{align}
where
\begin{align}\label{tildeR}
\begin{aligned}
	\tilde{R}_1=&-\bar{\theta}_1 \theta_{1 x}-\bar{\theta}_3 \theta_{3 x}, \\
	\tilde{R}_2=&R_1+\mu\left(\frac{\bar{u}_x}{\bar{v}}-\frac{\tilde{u}_x}{\tilde{v}}\right)+\left(\lambda_1^{-} \bar{\theta}_1 \theta_{1 x}+\lambda_3^{+} \bar{\theta}_3 \theta_{3 x}\right)+\left(\tilde{p}-\bar{p}-\lambda_1^{-2} \bar{\theta}_1 \theta_1-\lambda_3^{+2} \bar{\theta}_3 \theta_3\right),\\
	\tilde{R}_3= & R_2+\kappa\left(\frac{\bar{\theta}_x}{\bar{v}}-\frac{\tilde{\theta}_x}{\tilde{v}}\right)+\mu\left(\frac{\bar{u} \bar{u}_x}{\bar{v}}-\frac{\tilde{u} \tilde{u}_x}{\tilde{v}}\right)+p_{+}\left(\bar{\theta}_1 \theta_{1 x}+\bar{\theta}_3 \theta_{3 x}\right) \\
	& +\left(\tilde{p} \tilde{u}-\bar{p} \bar{u}-p_{+} \lambda_1^{-} \bar{\theta}_1 \theta_1-p_{+} \lambda_3^{+} \bar{\theta}_3 \theta_3\right) .
\end{aligned}
\end{align}
By \cref{cde,Ep,ansatz}, it yields
$$
\begin{aligned}
	\tilde{p}-\bar{p}= & -\frac{\bar{p}}{\bar{v}}(\tilde{v}-\bar{v})+\frac{R}{\bar{v}}(\tilde{\theta}-\bar{\theta})+O(1)\left[(\tilde{v}-\bar{v})^2+(\tilde{\theta}-\bar{\theta})^2\right] \\
	= & \frac{\gamma p_{-}}{v_{-}} \bar{\theta}_1 \theta_1+\frac{\gamma p_{+}}{v_{+}} \bar{\theta}_3 \theta_3  +O\left(\delta+\bar{\theta}_1^2+\bar{\theta}_3^2\right) \frac{1}{1+t}\left(e^{-\frac{c x^2}{1+t}}+e^{-\frac{c\left(x-\lambda_1^{-}(1+t)\right)^2}{1+t}}+e^{-\frac{c\left(x-\lambda_3^{+}(1+t)\right)^2}{1+t}}\right),
\end{aligned}
$$
for some positive constant $c>0$. Also, similar estimate holds for $\tilde{p} \tilde{u}-\bar{p} \bar{u}$. Thus there is
\begin{align}\label{eqRi}
\tilde{R}_i=O\left(\delta+\bar{\theta}_1^2+\bar{\theta}_3^2\right) \frac{1}{1+t}\left(e^{-\frac{c x^2}{1+t}}+e^{-\frac{c\left(x-\lambda_1^{-}(1+t)\right)^2}{1+t}}+e^{-\frac{c\left(x-\lambda_3^{+}(1+t)\right)^2}{1+t}}\right),
\end{align}
for $i=1,2,3$.

{Let $(v,u,\theta)$  be the solutions of the Cauchy problem for Navier-Stokes equations \eqref{1.1} with the initial data}
\begin{align}\label{inti0}
(v,u,\theta)(x,t=0)=(v_0,u_0,\theta_0)(x).
\end{align}
Denote the perturbation around the ansatz $(\tilde{v}, \tilde{u}, \tilde{\theta})$ by
\begin{align}\label{ppz}
\phi(x, t)=v-\tilde{v}, \quad \psi(x, t)=u-\tilde{u}, \quad \zeta(x, t)=\theta-\tilde{\theta},
\end{align}
and set
\begin{align}\label{anti}
\begin{gathered}
	\Phi(x, t)=\int_{-\infty}^x \phi(y, t) d y, \quad \Psi(x, t)=\int_{-\infty}^x \psi(y, t) d y, \\
	\bar{W}(x, t)=\int_{-\infty}^x\left(e+\frac{|u|^2}{2}-\tilde{e}-\frac{|\tilde{u}|^2}{2}\right)(y, t) d y .
\end{gathered}
\end{align}
Since the compressible Navier-Stokes equations \eqref{1.1} and the system \cref{sys-ansatz} are in the conservative forms and $(\Phi, \Psi, \bar{W})(\pm \infty, 0)=0$ due to \eqref{mm1}, the quantities $(\Phi, \Psi, \bar{W})$ can be well defined in some Sobolev space.

There is the main result for the compressible Navier-Stokes equations \eqref{1.1}-\eqref{assum} under the perturbation with non-zero initial excess mass as follows.
\begin{Thm}
\label{mt}
	Let $(\bar{v}, \bar{u}, \bar{\theta})(x, t)$ be the contact wave defined in \cref{c}, then there exists small positive constants $\varepsilon_0$, $\delta_0$, such that if the initial data $\left(v_0, u_0, \theta_0\right)$ satisfies
	\begin{align}
		\delta&:=\left|\theta_{+}-\theta_{-}\right| \leq \delta_0,\qquad
		\varepsilon:=	\|(\Phi, \Psi, \bar{W})(\cdot,0)\|_{L^2}+\|(\phi, \psi, \zeta)(\cdot,0)\|_{H^1}+\norm{\phi_{xx}(\cdot,0)}_{L2} \leq \varepsilon_0,
	\end{align}
 the system \cref{1.1} admits a unique global-in-time solution $(v, u, \theta)(x, t)$ satisfying
\begin{align}
	\begin{aligned}
		(\Phi, \Psi, \bar{W})  \in C\left(0,+\infty ; H^2\right), \quad
		(\phi,\psi, \zeta)  \in L^2\left(0,+\infty ; H^2\right),
	\end{aligned}
\end{align}
	where  $(\Phi, \Psi, \bar{W})$ and $(\phi, \psi, \zeta)$ are the perturbations defined in \cref{ppz,anti}. Moreover, the following decay rate holds,
	\begin{align}\label{L2decay}
	\begin{aligned}
	&\norm{\big(\phi, \psi, \zeta\big)}_{L^{2}} \leq C\left(\varepsilon_0+\delta_0^{\frac{1}{4}}\right)(1+t)^{-\frac{1}{4}},\\
	&\norm{\big(\phi, \psi, \zeta\big)_x}_{L^{2}} \leq C\left(\varepsilon_0+\delta_0^{\frac{1}{4}}\right)(1+t)^{-\frac{3}{4}},
	\end{aligned}
	\end{align}
	and
	\begin{align}\label{decay}
	\begin{aligned}
		\norm{\big(v-\bar{v}, u-\bar{u}, \theta-\bar{\theta}\big)}_{L^{\infty}} \leq C\left(\varepsilon_0+\delta_0^{\frac{1}{4}}\right)(1+t)^{-\frac{1}{2}},
	\end{aligned}
		\end{align}
	where ${C}$ is a positive constant independent of time.
\end{Thm}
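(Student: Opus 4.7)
The plan is to apply the anti-derivative technique to the perturbation about the ansatz $(\tilde v,\tilde u,\tilde\theta)$, whose construction in \cref{ansatz} already absorbs the initial excess mass into two Gaussian diffusion waves travelling along the sound characteristics $\lambda_1^-$ and $\lambda_3^+$. I would begin by integrating the difference of \cref{1.1} and \cref{sys-ansatz} in $x$ to obtain a hyperbolic--parabolic system for $(\Phi,\Psi,\bar W)$ whose principal part is $\partial_t+A(\tilde m)\partial_x$ plus the viscous operator, with source terms controlled by the $\tilde R_i$ from \cref{eqRi} together with quadratic nonlinearities arising from the state equations \cref{assum}.

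The crucial structural step is then to diagonalize this integrated system using the right eigenvectors of the asymptotic Jacobian $A$, producing three scalar equations indexed by the characteristic families. The payoff is twofold: first, the worst source $R_1=O(D_{-1})$ from \cref{vr}, which appears only in the second equation of \cref{cd} and is only marginally integrable in time, now feeds solely the two sound-mode equations; these carry genuine viscous/heat dissipation and hence can absorb such a forcing by parabolic regularization. Second, the linearly degenerate contact component is driven only by the faster-decaying $R_2=O(D_{-3/2})$ and by nonlinear products, which is the structural cancellation announced in the introduction and encoded in the dissipation observation \cref{ds}.

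Next, I would set up a continuity argument under an a priori bound of the schematic form
$$
N(T):=\sup_{0\le t\le T}\Bigl\{\|(\Phi,\Psi,\bar W)\|_{L^2}^2+(1+t)\|(\phi,\psi,\zeta)\|_{L^2}^2+(1+t)^{\frac{3}{2}}\|(\phi,\psi,\zeta)_x\|_{L^2}^2\Bigr\}\le\varepsilon_0^{\frac{1}{2}},
$$
supplemented by a weaker control of $\phi_{xx}$. The basic $L^2$ estimate on the diagonalized variables produces a contact-wave interaction of the form $\int\bar u_x|\cdot|^2\,dx$, and this is where the \emph{sign invariance} of $\bar u_x$ (inherited from the monotonicity of $\hat\Theta$ in \cref{cde}) is decisive: choosing weights compatible with that fixed sign turns this contribution into either a friendly dissipative term or one that can be absorbed. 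The slow source $D_{-1}$ is then handled by Cauchy--Schwarz against its Gaussian localization together with the smallness of $\delta+\bar\theta_1^2+\bar\theta_3^2$. Higher-order estimates are performed in the unintegrated variables; because the first equation of \cref{1.1} lacks parabolic smoothing, a separate $\phi_{xx}$-estimate is required, which explains the presence of $\|\phi_{xx}(\cdot,0)\|_{L^2}$ among the hypotheses.

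The hard part will be upgrading mere time-integrability to the \emph{optimal time-weighted decay}: it is not enough to close $N(T)\le C\varepsilon_0$; one must extract, after diagonalization, enough dissipation to beat the non-integrable forcing $\tilde R_1$ and recover the $(1+t)$- and $(1+t)^{3/2}$-weighted bounds. Once those weighted estimates are in hand, \cref{L2decay} is immediate, and the uniform bound \cref{decay} follows from the Gagliardo--Nirenberg interpolation $\|f\|_{L^\infty}\lesssim\|f\|_{L^2}^{1/2}\|f_x\|_{L^2}^{1/2}$ together with the known decay of the two diffusion-wave corrections $\bar\theta_1\theta_1$ and $\bar\theta_3\theta_3$, estimated as in \cite{HXY}.
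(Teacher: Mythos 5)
Your skeleton is the right one — anti-derivative reformulation about the ansatz, diagonalization by the left/right eigenvectors of the asymptotic flux Jacobian, sign-preserving weights, and weighted Gronwall plus Gagliardo--Nirenberg — and these match the paper's architecture. However, the \emph{key mechanism} you invoke to tame the slowly decaying source is wrong for this theorem, and your a priori bound could not be closed.

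First, you assert that after diagonalization the $O(D_{-1})$ source $R_1$ ``feeds solely the two sound-mode equations,'' leaving the linearly degenerate component driven only by $R_2=O(D_{-3/2})$. That cancellation is the mechanism for the \emph{zero-mass} Theorem~\ref{mt0}, where $\mathring A_3=(0,\mathring Q_1,\frac{\gamma-1}{R}\mathring Q_2)^t$ and $l_2$ has a zero in the $\Psi$-slot. For the present non-zero-mass Theorem~\ref{mt}, the ansatz $\tilde m$ introduces $\tilde R_1=-\bar\theta_1\theta_{1x}-\bar\theta_3\theta_{3x}$ directly into the first (mass) equation of \cref{sys-Ori}, so $A_3=(\tilde R_1,Q_1,Q_2)^t$ and the contact mode $l_2\cdot A_3=\sqrt{\tfrac{\gamma-1}{\gamma}}\bigl(\tilde R_1+\tfrac{R}{(\gamma-1)p_+}Q_2\bigr)$ \emph{is} fed by $O(D_{-1})$ forcings; no cancellation survives the replacement of $\bar m$ by $\tilde m$. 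Moreover, parabolic regularization alone does not absorb these terms, because the contact mode carries no direct viscosity. What the paper actually uses is the weighted multiplier $\bar B^{(k)}=(v_1^n\partial_x^k b_1,\partial_x^k b_2,v_1^{-n}\partial_x^k b_3)$ with $n=[\delta^{-1/2}]+1$, which, thanks to the sign-invariance of $\hat\Theta_x$, produces the additional \emph{characteristic} dissipation $G_k=\int a_1|\partial_x^k b_1|^2+a_3|\partial_x^k b_3|^2\,dx$ with $a_1,a_3>0$ of size $\delta^{-1/2}D_{-1/2}$. It is $G_k$, not $\tilde K_k$ from \cref{ds}, that controls the bad interaction terms in \cref{I31}, and the residual $C\bar\delta(1+t)^{-1/2}$ in \cref{I32} is accommodated by allowing the anti-derivative energy to grow.

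Second, your a priori ansatz forces $\|(\Phi,\Psi,\bar W)\|_{L^2}^2\le\varepsilon_0^{1/2}$ to stay bounded, and $(1+t)\|(\phi,\psi,\zeta)\|^2\le\varepsilon_0^{1/2}$, i.e.\ an $L^2$ decay rate $(1+t)^{-1/2}$ for $(\phi,\psi,\zeta)$. Neither can hold: the non-integrable $(1+t)^{-1/2}$ contribution described above drives the anti-derivative energy up like $(1+t)^{1/2}$ (this growth is exactly the phenomenon already observed in \cite{HMX}), and correspondingly the best $L^2$ rate for $(\phi,\psi,\zeta)$ one can close here is $(1+t)^{-1/4}$, as asserted in \cref{L2decay}. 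The paper therefore controls $\|(\Phi,\Psi,W)\|_{L^\infty}$ (which remains bounded by interpolation between the growing $L^2$ norm and the decaying $\|(\Phi,\Psi,W)_x\|$) rather than the $L^2$ norm, and uses the weight $(1+t)^{1/2}$, not $(1+t)$, on $\|(\phi,\psi,\zeta)\|^2$ in \cref{apa}. With your stronger a priori assumption the bootstrap would not close.
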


\begin{Rem}
\eqref{decay} is optimal due to the presence of diffusion waves, cf. \cite{LX,XZ}, which gives a positive answer to \cite{HMX}. 
\end{Rem}
\ 

Now we introduce the case of zero initial excess mass.
The perturbations are given by 
\begin{align}\label{ppzz}
	\phib(x, t)=v-\bar{v}, \quad \psib(x, t)=u-\bar{u}, \quad 
	{\mathring\zeta}
	(x, t)=\theta-\bar{\theta},
\end{align}
{and set the anti-derivative variables} 
\begin{align}\label{anti0}
\begin{gathered}
	{\mathring\Phi(x, t)=\int_{-\infty}^x \phib(y, t) d y, \quad \mathring\Psi(x, t)=\int_{-\infty}^x \psib(y, t) d y}, \\
	{\mathring{\bar{W}}(x, t)=\int_{-\infty}^x\left(e+\frac{|u|^2}{2}-\tilde{e}-\frac{|\bar{u}|^2}{2}\right)(y, t) dy.}
\end{gathered}
\end{align}
{
Because of the conservative structure of both the Navier-Stokes system \eqref{1.1} and the approximate system \eqref{cd}, the quantities $(\mathring\Phi,\mathring\Psi,\mathring{\bar W})(\cdot,t)\in L^2(\mathbb{R})$ can be provided that the initial {excess} mass is zero, i.e.,}
\begin{align}\label{masszero1}
(\mathring\Phi,\mathring\Psi,\mathring{\bar W})(+\infty,0)=\int_{-\infty}^{+\infty}(v_0(x)-\bar v(x,0),u_0(x)-\bar u(x,0), E_0(x)-\bar E(x,0))dx=(0,0,0),
\end{align}
where $E=e+\frac{|u|^2}{2}$ is the total energy. Then the main result concerning zero initial {excess} mass is as follows. 
\begin{Thm}
\label{mt0}
	Let $(\bar{v}, \bar{u}, \bar{\theta})(x, t)$ be the contact wave defined in \cref{c}, then there exists small positive constants $\varepsilon_0$, $\delta_0$, such that if the initial data $\left(v_0, u_0, \theta_0\right)$ satisfies
	\begin{align}
		\delta&:=\left|\theta_{+}-\theta_{-}\right| \leq \delta_0,\qquad
		\varepsilon:=	\|(\mathring\Phi, \mathring\Psi, \mathring{\bar{W}})(\cdot,0)\|_{L^2}+\|(\phib, \psib, \mathring\zeta)(\cdot,0)\|_{H^1}+\norm{\phib_{xx}(\cdot,0)}_{L2} \leq \varepsilon_0,
	\end{align}
 the system \cref{1.1} admits a unique global-in-time solution $(v, u, \theta)(x, t)$ satisfying
\begin{align}
	\begin{aligned}
		(\mathring\Phi, \mathring\Psi,\mathring{ \bar{W}})  \in C\left(0,+\infty ; H^2\right), \quad
		(\phib,\psib, {\mathring\zeta})  \in L^2\left(0,+\infty ; H^2\right),
	\end{aligned}
\end{align}
	where  $(\mathring\Phi, \mathring\Psi, \mathring{\bar{W}})$ and $(\phib, \psib, {\mathring\zeta})$ are the perturbations defined in \cref{ppzz,anti0}. Moreover, the following decay rate holds,
	\begin{align}\label{L2decay0}
	\norm{\big(\phib, \psib, \mathring\zeta\big)}_{L^{2}} \leq C\left(\varepsilon_0+\delta_0^{\frac{1}{4}}\right)(1+t)^{-\frac{1}{2}}\ln^{\frac{1}{2}}(2+t),
	\end{align}
	\begin{align}\label{L2decay1}
	\norm{\big(\phib, \psib, \mathring\zeta\big)_x}_{L^{2}} \leq C\left(\varepsilon_0+\delta_0^{\frac{1}{4}}\right)(1+t)^{-1}\ln^{\frac{1}{2}}(2+t),
	\end{align}
	and
	\begin{align}\label{decay0}
	\begin{aligned}
		\norm{\big(v-\bar{v}, u-\bar{u}, \theta-\bar{\theta}\big)}_{L^{\infty}} \leq C(\e+\bar\delta^{\frac{1}{4}})(1+t)^{-\frac{3}{4}}\ln^{\frac{1}{2}}(2+t),
	\end{aligned}
		\end{align}
	where ${C}$ is a positive constant independent of time.
\end{Thm}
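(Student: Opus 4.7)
The plan is to work directly with the anti-derivative variables $(\mathring\Phi,\mathring\Psi,\mathring{\bar W})$, which are well-defined in $L^2$ thanks to the zero initial excess mass condition \cref{masszero1}. Subtracting the contact wave system \cref{cd} from the Navier-Stokes system \cref{1.1}, writing the result in divergence form, and integrating in $x$, I obtain a parabolic-hyperbolic system for $(\mathring\Phi,\mathring\Psi,\mathring{\bar W})$ whose forcing contains the error terms $R_1,R_2$ from \cref{vr,vr2}. The main difficulty is that $R_1=O(1)D_{-1}$ is only borderline non-integrable in time, so a direct energy estimate cannot produce a decay rate better than what is already known. To overcome this, I will diagonalize the principal (hyperbolic) part of the integrated system by introducing the Riemann-invariant-type coordinates aligned with the eigenvectors of the flux Jacobian at $(v_\pm,0,\theta_\pm)$; the crucial observation (this is \textbf{the} structural input) is that in the linearly degenerate characteristic field the worst source $R_1$ is eliminated by a cancellation, so it appears only in the two genuinely nonlinear components of the diagonalized system.

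Next I set up an a priori assumption of the form
\begin{equation*}
\sup_{0\le s\le t}\bigl[(1+s)\,\|(\mathring\Phi,\mathring\Psi,\mathring{\bar W})(s)\|^{2}+(1+s)^{2}\|(\phib,\psib,\mathring\zeta)(s)\|^{2}+(1+s)^{3}\|(\phib,\psib,\mathring\zeta)_x(s)\|^{2}\bigr]\le M^{2}\ln(2+t),
\end{equation*}
and aim to close it with $M\le C(\e_0+\delta_0^{1/4})$. The basic $L^{2}$ estimate on the diagonalized variables proceeds by multiplying each equation by a suitable weight (for the linearly degenerate component this is just the component itself; for the genuinely nonlinear components one includes the $\bar\theta$-dependent weights coming from the relative entropy). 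The viscous and heat-conductive dissipation gives an $\|(\phib,\psib,\mathring\zeta)_x\|^2$ term on the left, while the monotonicity of $\hat\Theta$, i.e.\ the sign invariance of $\bar u_x$, turns the bad quadratic terms $\int \bar u_x(\mathring\Phi,\mathring\Psi)^2$ into a definite-sign good term that can absorb the $R_1$-driven contributions; the remaining linear source of the form $\int R_1\cdot\mathring\Psi$ can then be bounded by Cauchy--Schwarz and the Gaussian decay of $D_{-1}$.

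To promote this estimate into the sharp rate, I will use the critical heat-kernel inequality of \cite{HLM}: for a Gaussian profile $G_t(x)=(4\pi(1+t))^{-1/2}e^{-x^2/(4(1+t))}$ one has
\begin{equation*}
\int_{\R} G_t\,f^2\,dx \le C\,\|f\|\,\|f_x\|+\text{lower-order},
\end{equation*}
applied to $f=(\mathring\Phi,\mathring\Psi,\mathring{\bar W})$. Combined with the dissipative term $\|(\phib,\psib,\mathring\zeta)_x\|^2=\|(\mathring\Phi,\mathring\Psi,\mathring{\bar W})_{xx}\|^2$ from the basic estimate, this yields a Poincar\'e-type inequality on the anti-derivatives that effectively converts the $(1+t)^{-1}$ coefficient on the bad quadratic term into an integrable one, at the price of a $\ln(2+t)$ factor. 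Iterating this bound in time via a Gronwall argument on $(1+t)\|(\mathring\Phi,\mathring\Psi,\mathring{\bar W})\|^2$ produces the target rate $\|(\mathring\Phi,\mathring\Psi,\mathring{\bar W})\|^{2}\lesssim \ln(2+t)/(1+t)$ and $\|(\phib,\psib,\mathring\zeta)\|^2\lesssim \ln(2+t)/(1+t)^2$, which is \cref{L2decay0}.

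Finally, I would run first- and second-order derivative estimates on $(\phib,\psib,\mathring\zeta)$ in the same spirit, differentiating the non-integrated reformulation and again exploiting the sign of $\bar u_x$ in the bad quadratic terms. The higher-order a priori assumption on $\phib_{xx}$ is used to control the nonlinear terms arising from the parabolic regularization of $v$ being weaker than that of $u,\theta$; this is the step that forces the assumption $\|\phib_{xx}(\cdot,0)\|_{L^2}\le\e_0$. Combining the $L^{2}$ decay of $(\phib,\psib,\mathring\zeta)$ with the derivative decay \cref{L2decay1} via the Gagliardo--Nirenberg interpolation $\|f\|_{L^\infty}\le C\|f\|^{1/2}\|f_x\|^{1/2}$ yields the $L^\infty$ rate \cref{decay0}. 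The main obstacle throughout is the balance between the non-integrable $R_1$, the monotonicity-driven good term, and the HLM critical estimate: these must fit together so that the logarithm appears only once (and not iteratively blown up into powers of $\ln$), which requires the cancellation in the linearly degenerate field to be used in the diagonalized rather than the original system.
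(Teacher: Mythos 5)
Your structural plan is the paper's: anti-derivatives, diagonalization of the integrated system along the flux eigenvectors so that the non-integrable source $R_1$ disappears from the linearly degenerate component, exponential weights $v_1^{\pm n}$ built from the sign-invariance of $\hat\Theta_x$ to create the dissipation $\mathring G_k$ that absorbs the surviving $R_1$-terms (this is precisely \eqref{1.5}), an HLM-type Poincar\'e estimate, and weighted Gronwall. However, there is a genuine quantitative gap: every rate in your a priori assumption is one power of $(1+t)$ too strong, and the closure cannot reproduce them. You postulate $(1+s)\|(\mathring\Phi,\mathring\Psi,\mathring W)(s)\|^2\le M^2\ln(2+t)$, i.e.\ $L^2$-decay of the anti-derivatives, but in the zero-mass case the anti-derivatives do \emph{not} decay: the Gronwall step applied to \eqref{b011} gives only $\sum_{i=0}^2\mathring E_i\le C(\e^2+\bar\delta^{1/2})\ln(2+t)$, logarithmic growth, and there is no mechanism at order zero to harvest an additional $(1+t)^{-1}$ factor. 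The $(1+t)$-weighted Gronwall that promotes $\mathring E_1,\mathring E_2$ relies on $\mathring{\tilde E}_1=O(1)\mathring K_0$ and $\mathring{\tilde E}_2=O(1)\mathring K_1$, whose time-integrals were controlled at the previous order; $\mathring E_0$ has no such backstop. Correspondingly, the closable rates are $\|(\phib,\psib,\mathring\zeta)\|^2\lesssim(1+t)^{-1}\ln(2+t)$ and $\|(\phib,\psib,\mathring\zeta)_x\|^2\lesssim(1+t)^{-2}\ln(2+t)$. Your concluding line ``$\|(\phib,\psib,\mathring\zeta)\|^2\lesssim\ln(2+t)/(1+t)^2$, which is \eqref{L2decay0}'' misreads \eqref{L2decay0}: that estimate says $\|(\phib,\psib,\mathring\zeta)\|_{L^2}\le C(1+t)^{-1/2}\ln^{1/2}(2+t)$, i.e.\ $(1+t)^{-1}\ln$ for the square, and this is sharp; the rate you claim is ruled out by the pointwise results of \cite{XZ}.

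A second, related issue: what you write as the HLM estimate, $\int_{\R}G_t f^2\,dx\le C\|f\|\|f_x\|$, is an ordinary H\"older/Gagliardo--Nirenberg bound (and even comes with an extra $(1+t)^{-1/4}$ prefactor from $\|G_t\|_{L^2}$), not the inequality that is actually used. The real lemma (Lemma \ref{heat0}) is a \emph{space-time} estimate, $\int_0^T\int h^2\omega^2\,dx\,dt\le 4\pi\|h(0)\|^2+4\pi\alpha^{-1}\int_0^T\|h_x\|^2\,dt+8\alpha\int_0^T\langle h_t,hg^2\rangle\,dt$, and its entire power sits in the last term: one substitutes the PDE for $h_t$ (choosing $h=R\mathring W+(\gamma-1)p_+\mathring\Phi$, see \eqref{poin9}) and integrates by parts to convert $\langle h_t,hg^2\rangle$ into viscous dissipation and lower-order pieces, which is how the Poincar\'e inequality \eqref{poin8} is obtained. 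A static interpolation at each fixed time does not use the equation and cannot, by itself, beat the borderline $(1+t)^{-1}$ coefficient without presupposing exactly the decay of $\|f_x\|$ you are trying to prove.
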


\begin{Rem}
\eqref{L2decay0} is 
 optimal, and is consistent with the results
\begin{align}
\begin{aligned}
\|u(\cdot,t)-u^a(\cdot,t)\|_{L^p(\mathbb R)}=\begin{cases}
		(1+t)^{-1+\frac{1}{p}}, \ \ \ 1<p<2;\\
		(1+t)^{-\frac{1}{2}}\ln^{\frac{1}{2}}(1+t), \ \ \ p=2;\\
		(1+t)^{-\frac{3}{4}+\frac{1}{2p}}, \ \ \ 2<p\le\infty,
	\end{cases}
\end{aligned}
\end{align}
that obtained by using pointwise estimate for vicous conservation laws in \cite{XZ}.
\end{Rem}

\begin{Rem}
{In the case of non-zero initial excess mass, the coupling of diffusion waves (as $\theta_1(x,t)$ and $\theta_3(x,t)$) and the viscous contact wave leads to some poor decay terms such as $\tilde R_i$ in \eqref{tildeR}, so that the usual energy approach cannot be applied. For the case of zero initial excess mass, compared with general perturbation, it is easier to achieve the optimal decay rate due to the additional diffusion waves.}
\end{Rem}

\section{
 Non-zero initial mass}
 {In this section, we aim to prove the main theorem \ref{mt}. Firstly, we reformulated the compressible Navier-Stokes equations to an integrated system. Then we give a diagonalized system to control terms with poor decay rates by exploiting an intrinsic dissipation associated with the contact wave. Finally, the energy estimation is used to close a priori assumptions, and then the optimal decay rate of the perturbation in $L^\infty $ norm is obtained by utilizing Gronwall's inequality.}
\subsection{Reformulated system}
{In order to take advantage of the structure of the underlying contact wave, use the intrinsic dissipation in the compressible Navier-Stokes system and overcome the strong nonlinearity, we introduce the anti-derivative variables $(\Phi,\Psi,\bar W)$ in \eqref{anti}.}
Thus, direct calculations yield the following relationship,
$$
(\phi, \psi)=(\Phi, \Psi)_x\quad \text{and}\quad \frac{R}{\gamma-1} \zeta+\frac{1}{2}\left|\Psi_x\right|^2+\tilde{u} \Psi_x=\bar{W}_x.
$$
Subtracting \eqref{sys-ansatz} from equation \eqref{1.1} and integrating the resulting system yield
 the following integrated system,
\begin{align}\label{sys-Ori}
\left\{\begin{array}{l}
	\Phi_t-\Psi_x=-\tilde{R}_1, \\
	\Psi_t+p-\tilde{p}=\frac{\mu}{v} u_x-\frac{\mu}{\tilde{v}} \tilde{u}_x-\tilde{R}_2, \\
	\bar{W}_t+p u-\tilde{p} \tilde{u}=\frac{\kappa}{v} \theta_x-\frac{\kappa}{\tilde{v}} \tilde{\theta}_x+\frac{\mu}{v} u u_x-\frac{\mu}{\tilde{v}} \tilde{u} \tilde{u}_x-\tilde{R}_3.
\end{array}\right.
\end{align}
 To capture the diffusive effect, we introduce another variable for convenience,
\begin{align}\label{WZEta}
W=\frac{\gamma-1}{R}(\bar{W}-\tilde{u} \Psi), \qquad
\Rightarrow\qquad 
\zeta=W_x-Y, \quad \text { with\quad } Y=\frac{\gamma-1}{R}\left(\frac{1}{2} \Psi_x^2-\tilde{u}_x \Psi\right) .
\end{align}
Using the new variable $W$ and linearizing the left-hand side of the system \eqref{sys-Ori}
, we have
\begin{align}\label{sys-pert}
\left\{\begin{array}{l}
	\Phi_t-\Psi_x=-\tilde{R}_1, \\
	\Psi_t-\frac{p_{+}}{\tilde{v}} \Phi_x+\frac{R}{\tilde{v}} W_x=\frac{\mu}{\tilde{v}} \Psi_{x x}+Q_1, \\
	\frac{R}{\gamma-1} W_t+p_{+} \Psi_x=\frac{\kappa}{\tilde{v}} W_{x x}+Q_2,
\end{array}\right.
\end{align}
where
\begin{align}
&\begin{aligned}\label{J}
	J_1=&\frac{\tilde{p}-p_{+}}{\tilde{v}} \Phi_x-\left[p-\tilde{p}+\frac{\tilde{p}}{\tilde{v}} \Phi_x-\frac{R}{\tilde{v}}(\theta-\tilde{\theta})\right]=O(1)\left(\Phi_x^2+W_x^2+Y^2+|\tilde{u}|^4{+\theta_1^2+\theta_3^2}\right), \\
	J_2=&\left(p_{+}-p\right) \Psi_x=O(1)\left(\Phi_x^2+\Psi_x^2+W_x^2+Y^2+|\tilde{u}|^4{+\theta_1^2+\theta_3^2}\right), 
\end{aligned}\\
&\begin{aligned}\label{Q}
	Q_1=&\left(\frac{\mu}{v}-\frac{\mu}{\tilde{v}}\right) u_x+J_1+\frac{R}{\tilde{v}} Y-\tilde{R}_2, \\
	Q_2=&\left(\frac{\kappa}{v}-\frac{\kappa}{\tilde{v}}\right) \theta_x+\frac{\mu u_x}{v} \Psi_x-\tilde{R}_3-\tilde{u}_t \Psi+\tilde{u} \tilde{R}_2+J_2-\frac{\kappa}{\tilde{v}} Y_x .
\end{aligned}
\end{align}
Also, one can obtain the estimates for derivatives
\begin{align}\label{Jx}
	\begin{aligned}
	J_{1x}=&O(\delta) D_{-\frac{1}{2}} \left(\Phi_x^2+W_x^2+Y^2+{|\bar u_x|^2+|\bar u|^4+\theta_1^2+\theta_3^2}
	\right)\\
	&+{O(1)\left[\left(|\bar u|^2+\theta_1+\theta_3\right)\Phi_{xx}+\Phi_{xx}^2+W_{xx}^2+Y_x^2\right]},\\
	J_{2x}=&O(\delta) D_{-\frac{1}{2}} \left(\Phi_x^2+\Psi_x^2+W_x^2+Y^2+{|\bar u_x|^2+|\bar u|^4+\theta_1^2+\theta_3^2}\right)\\
	&+{O(1)\left[\left(\Phi_x+W_x+Y+|\bar u|^2+\theta_1+\theta_3\right)\Psi_{xx}+\left(\Phi_{xx}+W_{xx}+Y_x\right)\Psi_x\right]}.\\
	\end{aligned}
\end{align}

In the following subsection, we will consider the Cauchy problem for the reformulated system \eqref{sys-pert}. Because the local existence of \cref{sys-pert} is well known, we omit it for brevity. To prove the global existence, 
 we present the following a priori assumptions,
\begin{align}\label{apa}
N(T)=\sup _{0 \leqslant t \leqslant T}\left\{\|(\Phi, \Psi, W)\|_{L^{\infty}}^2+(1+t)^{\frac{1}{2}}\|(\phi, \psi, \zeta)\|^2+(1+t)^{\frac{3}{2}}\|(\phi_x,\phi_{xx}, \psi_x, \zeta_x)\|^2\right\} \leqslant \chi^2,
\end{align}
where $\chi>0$ denotes a small constant depending on $\varepsilon$ and $\delta$. {By \eqref{Z}, it is obvious that 
\begin{align}\label{smallconstants}
{\left|\bar{\theta}_1\right|+\left|\bar{\theta}_3\right| \leqslant C\varepsilon_0,}
\end{align}
 for some constant $C>0$.} For simplicity, we denote $\bar{\delta}:=\varepsilon+\chi+\delta.$
\subsection{Diagonalized system}
Denoting $\mathcal{W}=(\Phi, \Psi, W)^t$, one has,
\begin{align}\label{equ-W}
\mathcal{W}_t+A_1 \mathcal{W}_x=A_2 \mathcal{W}_{x x}+A_3,
\end{align}
where
$$
\begin{gathered}
	A_1=\left(\begin{array}{ccc}
		0 & -1 & 0 \\
		-\frac{p_{+}}{\bar{v}} & 0 & \frac{R}{\bar{v}} \\
		0 & \frac{\gamma-1}{R} p_{+} & 0
	\end{array}\right), \quad A_2=\left(\begin{array}{ccc}
		0 & 0 & 0 \\
		0 & \frac{\mu}{\bar{v}} & 0 \\
		0 & 0 & \frac{\kappa(\gamma-1)}{R \bar{v}}
	\end{array}\right), \quad
	A_3=\left(\tilde{R}_1, Q_1,  Q_2\right)^t .
\end{gathered}
$$
The eigenvalues of the matrix $A_1$ are $\lambda_1, 0, \lambda_3$, where $\lambda_3=-\lambda_1=\sqrt{\frac{\gamma p_{+}}{\bar{v}}}$. And the corresponding left and right eigenvectors are
$$
\begin{aligned}
	& l_1=\sqrt{\frac{1}{2 \gamma}}\left(-1,-\frac{\gamma}{\lambda_3}, \frac{R}{p_{+}}\right), \quad l_2=\sqrt{\frac{\gamma-1}{\gamma}}\left(1,0, \frac{R}{(\gamma-1) p_{+}}\right), \\
	& l_3=\sqrt{\frac{1}{2 \gamma}}\left(-1, \frac{\gamma}{\lambda_3}, \frac{R}{p_{+}}\right), \quad r_1=\sqrt{\frac{1}{2 \gamma}}\left(-1,-\lambda_3, \frac{\gamma-1}{R} p_{+}\right)^t, \\
	& r_2=\sqrt{\frac{\gamma-1}{\gamma}}\left(1,0, \frac{p_{+}}{R}\right)^t, \quad r_3=\sqrt{\frac{1}{2 \gamma}}\left(-1, \lambda_3, \frac{\gamma-1}{R} p_{+}\right)^t.
\end{aligned}
$$
Direct calculations yield,
$$
l_i r_j=\delta_{i j}, i, j=1,2,3, \quad L A_1 R=\Lambda=\left(\begin{array}{ccc}
	\lambda_1 & 0 & 0 \\
	0 & 0 & 0 \\
	0 & 0 & \lambda_3
\end{array}\right) \text {, }
\quad 
\text{where}\quad
L=\left(l_1, l_2, l_3\right)^t, R=\left(r_1, r_2, r_3\right) .
$$
Let
$$
B=L \mathcal{W}=\left(b_1, b_2, b_3\right)^t,
$$
then multiplying \cref{equ-W} by  $L$, one has
\begin{align}\label{equ-B}
\begin{aligned}
	B_t+\Lambda B_x=L A_2 R B_{x x}+2 L A_2 R_x B_x+\big[\left(L_t+\Lambda L_x\right) R+L A_2 R_{x x}\big] B+L A_3 .
\end{aligned}
\end{align}
The viscosity matrix are
$$
L A_2 R=A_4=\left(\begin{array}{lll}
	b_{11} & b_{12} & b_{13} \\
	b_{12} & b_{22} & b_{12} \\
	b_{13} & b_{12} & b_{11}
\end{array}\right),
$$
with
$$
\begin{aligned}
	& \bar{v} b_{11}=\frac{\mu}{2}+\frac{(\gamma-1)^2 \kappa}{2 \gamma R}
	, \quad \bar{v} b_{12}=\sqrt{\frac{\gamma-1}{2}} \frac{(\gamma-1) \kappa}{\gamma R}
	, \\
	& \bar{v} b_{13}=-\frac{\mu}{2}+\frac{(\gamma-1)^2 \kappa}{2 \gamma R}
	, \quad \bar{v} b_{22}=\frac{(\gamma-1) \kappa}{\gamma R} .
\end{aligned}
$$
Applying $\px^k$, $k=0,1,2$ to \cref{equ-B}, one has
\begin{align}\label{equ-Bk}
	\begin{aligned}
		\px^kB_t+\Lambda \px^kB_x=L A_2 R \px^kB_{x x}+\mathcal{M}_k,
	\end{aligned}
\end{align}
where
\begin{align}
	\begin{aligned}
	\mathcal{M}_k:=&\sum_{j=1}^{k}\bigg[-\px^j\Lambda\px^{k-j+1}B+\px^j(LA_2R)\px^{k-j+2}B\bigg]+2\sum_{i=0}^{k}\px^i\big(L A_2 R_x\big) \px^{k-i+1}B\\
	&+\sum_{i=0}^{k}\bigg\{\px^i\big[\left(L_t+\Lambda L_x\right) R+L A_2 R_{x x}\big] \px^{k-i}B+\px^{i}L \px^{k-i}A_3 \bigg\}\\
	\le&O(1)\sum_{j=1}^kD_{-\frac{j}{2}}
	{\big(\abs{\px^{k-j+1}b_1}+\abs{\px^{k-j+1}b_3}\big)}+\sum_{j=1
	}^{k+2}|D_{-\frac{j}{2}}\px^{k-j+2}B|+\sum_{i=0}^{k}|\px^{i}L \px^{k-i}A_3|\\
	:=&\sum_{i=1}^{3}|\mathcal{M}_{k}^{(i)}|.
\end{aligned}
\end{align}

\subsection{Energy estimates}\label{Ees1}
For the global existence, we refer to \cite{HMX,HXY}. In this paper, we focus on the decay rates.  
\begin{Prop} 
	Under the same assumptions as \cref{mt}, let $(\Phi,\Psi,W,\zeta)$ be the solutions of \cref{sys-pert} in $[0,t]$ satisfying \cref{apa}. Then it holds that
	\begin{align}
\|(\Phi, \Psi, W)\|_{L^{\infty}}^2+(1+t)^{\frac{1}{2}}\|(\phi, \psi, \zeta)\|_{L^2}^2+(1+t)^{\frac{3}{2}}\|(\phi_x,\phi_{xx}, \psi_x, \zeta_x)\|_{L^2}^2\leq O(\varepsilon_0^2+\delta_0^{\frac{1}{2}}).
\end{align}
\end{Prop}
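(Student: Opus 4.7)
The plan is to close the a priori assumption \cref{apa} through a ladder of weighted $L^2$ energy estimates on the diagonalized system \cref{equ-Bk} at $k=0,1,2$, with time weights $1$, $(1+t)^{1/2}$, and $(1+t)^{3/2}$, respectively, and then deduce the $L^\infty$ bound on $(\Phi,\Psi,W)$ by Sobolev interpolation. At the base level ($k=0$) I would multiply \cref{equ-B} by $B^{\top}$ and integrate, obtaining $\frac{1}{2}\frac{d}{dt}\|B\|^2$ plus the parabolic dissipation $\int B_x^{\top} A_4 B_x\,dx$; the viscosity matrix $A_4=LA_2R$ is only positive semi-definite (it fails to control the linearly degenerate direction), which reflects the purely acoustic damping. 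The dangerous source in $LA_3$ is $R_1=O(1)D_{-1}$, which is only borderline integrable; its contribution is absorbed because $R_1$ is supported near the contact-wave region $x\simeq 0$, while the characteristics of $b_1,b_3$ escape at speeds $\mp\sqrt{\gamma p_+/\bar v}$, producing effective decoupling after integrating against the Gaussians $\theta_{1,3}$.

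At the higher levels ($k=1,2$) I would test \cref{equ-Bk} against $(1+t)^{k/2+1/2}\partial_x^k B$. The negative weight-derivative term $\tfrac{1}{2}(1+t)^{k/2-1/2}\|\partial_x^k B\|^2$ is absorbed into the dissipation $(1+t)^{k/2+1/2}\|\partial_x^{k+1}B\|^2$ after interpolating against the uniform $L^2$ bound from Step 1. The cross terms in $\mathcal M_k^{(1)}$ and $\mathcal M_k^{(2)}$ of schematic form $\int \bar u_x (\partial_x^k B)^2$, which appear through $x$-derivatives of $A_1$ and $R$, are handled by the sign invariance of $\bar u_x$: such integrals have a definite sign across $\mathbb{R}$, so they either strengthen the dissipation or are absorbed using the smallness of $\bar\delta$. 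The $\phi_{xx}$ component in the weighted $k=2$ estimate does not appear directly in $B_{xx}$ and is handled separately: differentiating $\cref{sys-pert}_1$ twice gives $\phi_{txx}=\psi_{xxx}-\partial_x^2\tilde R_{1x}$, and one tests against $\phi_{xx}$ and uses the viscosity in the $\psi$-equation to trade $\psi_{xxx}$ for $\phi_{xx}$-dissipation modulo commutators. Finally, the $L^\infty$ bound for $(\Phi,\Psi,W)$ follows from $\|f\|_{L^\infty}^2\le 2\|f\|_{L^2}\|f_x\|_{L^2}$ combined with the uniform $L^2$ bound on $B$ and the $(1+t)^{-1/4}$ decay of $\|B_x\|_{L^2}$ from Step 2.

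The principal obstacle is that the error $\tilde R_2$, whose leading part is $R_1\sim\delta D_{-1}$, is not square-integrable in time at any weighted level beyond $k=0$, so a naive energy method fails already at the first-derivative stage. The resolution, announced in the introduction as the \emph{dissipation structure} \cref{ds}, is twofold: first, the projection of $R_1$ onto the linearly degenerate eigenvector of $A_1$ cancels algebraically and therefore disappears from the $b_2$-equation of the diagonalized system; second, its projection onto the sound-wave directions is damped by the nonzero transport speeds $\lambda_{1,3}$ together with the fixed-sign property of $\bar u_x$, which allows integrals of the form $\int \bar u_x(\partial_x^k B)^2$ to be treated as a signed contribution rather than as indefinite source. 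Assembling the three weighted estimates into a single differential inequality and applying Gronwall, every summand in $N(T)$ is bounded by $O(\varepsilon_0^2+\delta_0^{1/2})$, which is strictly smaller than $\chi^2$ once $\chi$ is chosen proportional to $\varepsilon_0+\delta_0^{1/4}$, closing the bootstrap.
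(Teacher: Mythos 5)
Your overall ladder structure (three weighted estimates for $k=0,1,2$ on the diagonalized system, followed by Sobolev interpolation) matches the paper, but the key technical mechanism is missing, and the substitute you propose does not work.

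The central issue is the multiplier. You test $\partial_x^k B$ against $\partial_x^k B$ (with time weights). The paper instead multiplies by the \emph{spatially weighted} vector $\bar B^{(k)}=\bigl(v_1^{\,n}\,\partial_x^k b_1,\ \partial_x^k b_2,\ v_1^{-n}\,\partial_x^k b_3\bigr)$, where $v_1=\hat\Theta/\theta_+$ and $n=[\delta^{-1/2}]+1$ is deliberately taken \emph{large}. Differentiating these weights produces the new damping terms
\begin{align*}
G_k=\int_{\mathbb R}a_1\,|\partial_x^k b_1|^2+a_3\,|\partial_x^k b_3|^2\,dx,\qquad a_1,a_3=O(\delta^{-1/2})\,D_{-1/2}>0,
\end{align*}
and it is precisely because $n\lambda_1 v_{1x}$ and $n\lambda_3 v_{1x}$ dominate $v_1\lambda_{1x}$, $v_1\lambda_{3x}$ for $n$ large, combined with the fixed sign of $v_{1x}$, that both $a_1$ and $a_3$ become positive and large. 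You attribute the absorption of the cross terms $\int\bar u_x(\partial_x^k B)^2\,dx$ to ``the fixed-sign property of $\bar u_x$.'' Without the weights, this fails: after diagonalization, the commutators involve $\lambda_{1x}=-\lambda_{3x}$, so the unweighted contribution is $\int\lambda_{1x}\bigl(|\partial_x^k b_1|^2-|\partial_x^k b_3|^2\bigr)dx$, which is indefinite in sign. The sign invariance of $\hat\Theta_x$ alone is useless; it is the coupling of sign invariance with the exponential weights $v_1^{\pm n}$ and the large power $n\sim\delta^{-1/2}$ that manufactures a genuine Lyapunov-type damping $G_k\sim\delta^{-1/2}\!\int D_{-1/2}(|\partial_x^k b_1|^2+|\partial_x^k b_3|^2)$ capable of absorbing the $\mathcal M_k^{(1)}$ commutator terms, which are precisely of size $O(1)\int D_{-1/2}|\partial_x^{k'} b_j|^2\,dx$.

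Two secondary points. First, your assertion that the projection of $R_1$ onto the linearly degenerate eigenvector cancels, so $R_1$ disappears from the $b_2$-equation, is the mechanism exploited in the \emph{zero-mass} case (Section 4, where $\mathring A_3=(0,\mathring Q_1,\tfrac{\gamma-1}{R}\mathring Q_2)^t$ and $l_2$ annihilates the second component). In the non-zero-mass case addressed here, the second equation of the diagonalized system still sees $\tilde R_1$ and $Q_2$, both of which contain diffusion-wave errors of size $D_{-1}$; the paper controls the resulting slow $C\bar\delta(1+t)^{-1/2}$ source in \eqref{1} using the a priori $L^\infty$ bound from \eqref{apa}, not a cancellation. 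Second, the hand-waving appeal to ``characteristics of $b_1,b_3$ escaping at sound speeds after integrating against the Gaussians $\theta_{1,3}$'' is not a rigorous mechanism and is not how the paper absorbs $R_1$. Until the $v_1^{\pm n}$ multiplier (or an equivalent device producing the positive $G_k$) is introduced, the proof cannot be closed at the higher derivative levels.
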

\begin{proof}

 {\bf Step 1.} We assume that $\hat\Theta_x>0$. The case when $\hat\Theta_x<0$ can be discussed similarly. Let $v_1=\hat\Theta / \theta_{+}$, then $\left|v_1-1\right| \leqq C \delta$. Multiplying \cref{equ-Bk} by $\bar{B}^{(k)}=\left(v_1^n \px^kb_1, \px^kb_2, v_1^{-n} \px^kb_3\right)$ with a large positive integer $n:=[\delta^{-\frac{1}{2}}]+1
 $, we obtain
\begin{align}
\begin{aligned}
	&\int_\R\left(\frac{v_1^n}{2} |\px^kb_1|^2+\frac{1}{2} |\px^kb_2|^2+\frac{v_1^{-n}}{2} |\px^kb_3|^2\right)_t+\bar{B}^{(k)}_x A_4 \px^{k+1}Bdx
	+\int_{\R} a_1|\px^kb_1|^2+ a_3|\px^kb_3|^2dx\\
	=&\int_{\R}\red-\bar{B}^{(k)}A_{4x}\px^{k}B_x+\bigg[\left(\frac{v_1^n}{2}\right)_t \abs{\px^kb_1}^2+\left(\frac{v_1^{-n}}{2}\right)_t \abs{\px^kb_3}^2\bigg]+\bar{B}^{(k)}\mathcal{M}_kdx:=I_1+I_2+I_3,
\end{aligned}
\end{align}
where 
\begin{align}
\begin{aligned}
	&a_1:=-\frac{v_1^{n-1}}{2}\left(n \lambda_1 v_{1 x}+v_1 \lambda_{1 x}\right)=O(\delta^{-\frac{1}{2}})D_{-\frac{1}{2}},\\
	& a_3:=\frac{v_1^{-n-1}}{2}\left(n \lambda_3 v_{1 x}-v_1 \lambda_{3 x}\right)=O(\delta^{-\frac{1}{2}})D_{-\frac{1}{2}}.
	\end{aligned}
\end{align} 
We should first verify the dissipation term, direct calculations yield
\begin{align}\label{ds}
	\begin{aligned}
	\px^{k+1}B^tA_4\px^{k+1}B=&\frac{\kappa(\gamma-1)}{R\bar{v}}\left[\sqrt{\frac{\gamma-1}{2\gamma}}(\px^{k+1}b_{1}+\px^{k+1}b_{3})+\frac{1}{\sqrt{\gamma}}\px^{k+1}b_{2}\right]^2\\
	&\qquad+\frac{\mu}{\bar{v}}\left[\frac{1}{\sqrt{2}}(\px^{k+1}b_{3}-\px^{k+1}b_{1})\right]^2\\
	=&O(1)\big(\abs{\px^{k+1}\Psi}^2+\abs{\px^{k+1}W}^2\big).
\end{aligned}
\end{align}
For the sake of convenience, we denote
\begin{align}\label{EKG}
\begin{aligned}
	&\tilde{E}_k:=\int_{\R}\frac{v_1^n}{2} |\px^kb_1|^2+\frac{1}{2}| \px^kb_2|^2+\frac{v_1^{-n}}{2} |\px^kb_3|^2dx,\quad \tilde{K}_k:=\int_{\R}\px^{k+1}B^tA_4\px^{k+1}Bdx,\\
	& G_k:=\int_{\R} a_1|\px^kb_1|^2+ a_3|\px^kb_3|^2dx.
	\end{aligned}
\end{align}
Note that $\norm{\Phi_x}_{H^2}^2$ is not include in $\tilde{K}_i$, $i=0,1,2$, we further denote
\begin{align}\label{Ki}
	K_i:=\int_{\R}\abs{\px^{i+1}b_1}^2+\abs{\px^{i+1}b_2}^2+\abs{\px^{i+1}b_3}^2dx=O(1)\int_{\R}\abs{\px^{i+1}\Phi}^2+\abs{\px^{i+1}\Psi}^2+\abs{\px^{i+1}W}^2dx.
\end{align} 
Obviously
\begin{align}\label{I12}
	I_1+I_2\leq C\bar{\delta}^{\frac{1}{2}}(1+t)^{-1}\tilde{E}_k+C\bar{\delta}\big(\tilde{K}_k+\norm{\px^{k+1}\Phi}^2\big),
\end{align}
note that
$$
\begin{aligned}
	\left|\int(\bar{B}^{(k)}-\px^kB)_x A_4 \px^{k}B_x d x\right| & \leq  C\bar{\delta} \int\left|\px^{k+1}B\right|^2 d x+Cn
	 \int|\px^{k}B|^2
	\left|\hat\Theta_x\right|^2 
	d x \\
	& \leq C\bar{\delta}^{\frac{1}{2}}(1+t)^{-1} \tilde{E}_k+C\bar{\delta}^{\frac{1}{2}}\tilde{K}_k+C\bar{\delta}^{\frac{1}{2}} \int\left|\px^{k+1}\Phi\right|^2 d x.
\end{aligned}
$$
For $I_3$, one has
\begin{align}	\label{I31}
	\begin{aligned}
	\int_{\R}\bar{B}^{(k)}\mathcal{M}_{k}^{(1)}dx\leq& C\bar\delta^{\frac{1}{2}}\sum_{i=0}^k(1+t)^{-i}\int_{\R}a_1\abs{\px^{k-i}b_1}^2+a_3\abs{\px^{k-i}b_3}^2dx,\\
		\int_{\R}\bar{B}^{(k)}\mathcal{M}_{k}^{(2)}dx\leq& C\bar{\delta}
		\sum_{i=1
		}^{k+1}(1+t)^{-i}\norm{\px^{k-i+1}B}^2.
	\end{aligned}
\end{align}
We next estimate the rest terms in $I_3$ more carefully. For $k=0$,
\begin{align}
	\int_{\R}\bar{B}^{(0)}L A_3dx\leq O(1)\int_{\R}|\bar{B}^{(0)}|\left(D_{-1}+\abs{Q_1}+\abs{Q_2}\right)dx,
\end{align}
and one has
\begin{align}\label{B01}
	\begin{aligned}
	\int_{\R}|\bar{B}^{(0)}|D_{-1}dx\leq C\bar{\delta}(1+t)^{-\frac{1}{2}}+C\bar{\delta}(1+t)^{-1}\tilde{E}_0.
	\end{aligned}
\end{align}
On the other hand
\begin{align}
\int\left|Q_1\right||B^{(0)}| d x \leqslant \int\left|\left(\frac{\mu}{v}-\frac{\mu}{\tilde{v}}\right) u_x+J_1+\frac{R}{\tilde{v}} Y\right||B^{(0)}| d x+\int\left|\tilde{R}_2\right||B^{(0)}| d x=: I_3^1+I_3^2.
\end{align}
Since
\begin{align}\label{B00}
	\begin{aligned}
&\int\left|J_1 \right|\left| \bar{B}^{(0)}\right| d x \leqslant  C\bar\delta
\left(\left\|\Phi_x\right\|^2+\tilde{K}_0\right)+C\bar{\delta}(1+t)^{-1} \tilde{E}_0+C\bar{\delta}(1+t)^{{-\frac{1}{2}}}
,\\
	& \int\left|\left(\frac{\mu}{v}-\frac{\mu}{\tilde{v}}\right) u_x\right||\bar{B}^{(0)}| +|Y||\bar{B}^{(0)}| d x  \leqslant C\bar\delta
	 \tilde{K}_0+C\bar{\delta}\left\|\Phi_x\right\|^2+C\bar\delta
	 \tilde{K}_1+C\bar{\delta}(1+t)^{-1} \tilde{E}_0,
\end{aligned}
\end{align}
we obtain
\begin{align}\label{B02}
I_3^1 \leqslant C\bar{\delta}\left(\left\|\Phi_x\right\|^2+\tilde{K}_0\right)+C\bar{\delta}(1+t)^{-1} \tilde{E}_0+C\bar\delta
\tilde{K}_1+C\bar{\delta}(1+t)^{{-\frac{1}{2}}}.
\end{align}
By \eqref{eqRi}, $I_3^2$ is similar to \cref{B01},
\begin{align}
I_3^2 \leqslant C\bar{\delta}(1+t)^{-1} \tilde{E}_0+C\bar{\delta}(1+t)^{{-\frac{1}{2}}}.
\end{align}
 And $Q_2$ is similar to \cref{B00}. Then by \cref{B01,B02}, we arrive at
\begin{align}\label{I32}
	\int_{\R}\bar{B}^{(0)}LA_3dx\leqslant C\bar{\delta}\left(\left\|\Phi_x\right\|^2+\tilde{K}_0+\tilde{K}_1\right)+C\bar{\delta}(1+t)^{-1} \tilde{E}_0+C\bar{\delta}(1+t)^{-\frac{1}{2}}.
\end{align}
For $k=1$,
\begin{align}\label{B11}
	\int_{\R}\bar{B}^{(1)}(\px L A_3+L\px A_3)dx\leq O(1)\int_{\R}\left|\bar{B}^{(1)}\right|\left(D_{-\frac{3}{2}}+D_{-\frac{1}{2}}\abs{Q_1}
	+Q_{1x}+Q_{2x}\right)dx.
\end{align}
 We can estimate
\begin{align}\label{B12}
	\begin{aligned}
	&\int_{\R}\left|\bar{B}^{(1)}\right|\left(D_{-\frac{3}{2}}+D_{-\frac{1}{2}}\tilde R_2\right)dx=C\bar{\delta}(1+t)^{-\frac{3}{2}}+C\bar{\delta}(1+t)^{-1}\tilde{E}_1,\\
	&\int_{\R}D_{-\frac{1}{2}}\left|J_1 \| \bar{B}^{(1)}\right| d x 
	{
	\leq C\bar{\delta}\sum_{i=0,1}(1+t)^{-(2-i)}\tilde{E}_{i}+ C\bar{\delta}(1+t)^{-\frac{3}{2}}+C\bar\delta
	{K}_1,}\\
 & 	\int_{\R}D_{-\frac{1}{2}}\bigg(\left|\left(\frac{\mu}{v}-\frac{\mu}{\tilde{v}}\right) u_x\right|+|Y|\bigg)|\bar{B}^{(1)}| d x  \leqslant 
 C\bar\delta{K}_1
 +C\bar{\delta}(1+t)^{-1}\left( \tilde{E}_1+(1+t)^{-1}\tilde{E}_0\right).
	\end{aligned}
\end{align}
Then we have 
\begin{align}
{\int_{\R}\left|\bar{B}^{(1)}\right|D_{-\frac{1}{2}}|Q_1|dx\le C\bar{\delta}(1+t)^{-1}( \tilde{E}_1+(1+t)^{-1}\tilde{E}_0)+C\bar{\delta}(1+t)^{-\frac{3}{2}}+C\bar{\delta}
	K_1}.
\end{align}
 For the derivatives, one has
\begin{align}\label{B13}
	\begin{aligned}
	\int_{\R}\abs{\bar{B}^{(1)}}\abs{Q_{1x}}dx=&\int_{\R}\abs{\bar{B}^{(1)}}\abs{\px\left(\frac{\mu}{v}-\frac{\mu}{\tilde{v}}\right) u_x+\left(\frac{\mu}{v}-\frac{\mu}{\tilde{v}}\right) u_{xx}+J_{1x}+\px(\frac{R}{\tilde{v}} Y)+\px\tilde{R}_2}dx\\
	\leq&C\bar\delta
	(K_1{+K_2})
	+C\bar{\delta}(1+t)^{-1}\big(\tilde{E}_1+(1+t)^{-1}\tilde{E}_0\big)+C\bar{\delta}(1+t)^{-\frac{3}{2}}.
	\end{aligned}
\end{align}
Estimates on $Q_{2x}$ are similar.
Then by \cref{B11,B12,B13}, one has
\begin{align}\label{I33}
\begin{aligned}
	\int_{\R}&\bar{B}^{(1)}(\px L A_3+L\px A_3)dx\\
	&\leq C\bar\delta
	(K_1+
	 K_2)+C\bar{\delta}(1+t)^{-1}\big(\tilde{E}_1+(1+t)^{-1}\tilde{E}_0\big)+C\bar{\delta}(1+t)^{-\frac{3}{2}}.
	 \end{aligned}
\end{align}
For $k=2$, one has
\begin{align}
	\int_{\R}\px\bar{B}^{(2)}(\px L A_3+L\px A_3)dx\leq O(1)\int_{\R}\px\bar{B}^{(2)}\left(D_{-\frac{3}{2}}+D_{-\frac{1}{2}}
	\abs{Q_1}
	+Q_{1x}+Q_{2x}\right)dx.
\end{align}
We only focus on the highest order of derivatives, which involves $Q_{1x},\ Q_{2x}$. It holds 
\begin{align}\label{B23}
	\begin{aligned}
		&\int_{\R}\abs{\px\bar{B}^{(2)}}\abs{Q_{1x}}dx\\
		=&\int_{\R}\abs{\px\bar{B}^{(2)}}\abs{\px\left(\frac{\mu}{v}-\frac{\mu}{\tilde{v}}\right) u_x+\left(\frac{\mu}{v}-\frac{\mu}{\tilde{v}}\right) u_{xx}+J_{1x}+\px(\frac{R}{\tilde{v}} Y)+\px\tilde{R}_2}dx\\
		\leq&\int_{\R}\abs{\px\bar{B}^{(2)}}\abs{\Phi_{xx}(\Psi_{xx}+D_{-1})+\Phi_{x}(\Psi_{xxx}+D_{-\frac{3}{2}}) +J_{1x}+D_{-\frac{1}{2}}Y+Y_x+\px\tilde{R}_2}dx\\
		\leq&{C\bar\delta K_2}+
		C\bar\delta\sum_{i=0}^{2}(1+t)^{-(3-i)}\tilde{E}_{i}
		+C\bar{\delta}(1+t)^{-\frac{5}{2}},
	\end{aligned}
\end{align}
similarly,
\begin{align}\label{BQ223}
	\begin{aligned}
		\int_{\R}\abs{\px\bar{B}^{(2)}}\abs{Q_{2x}}dx
		\leq&{C\bar\delta(1+t)^{-3}\tilde E_0}+
		C\bar\delta\sum_{i=1}^{3}(1+t)^{-(i-1)}\tilde{E}_{4-i}
		+O(\bar{\delta})(1+t)^{-\frac{5}{2}},
	\end{aligned}
\end{align}
where we have used the a priori assumption $\norm{\bar{B}^{(1)}}^2\leq \chi(1+t)^{-1}$
. Then we arrive at
\begin{align}\label{I34}
		\int_{\R}\px\bar{B}^{(2)}(\px L A_3+L\px A_3)dx\leq 
		{C\bar\delta(1+t)^{-3}\tilde E_0}+
		C\bar{\delta}\sum_{i=1}^{3}(1+t)^{-(i-1)}\tilde{E}_{4-i}+C\bar{\delta}(1+t)^{-\frac{5}{2}}.
\end{align}
Collecting \cref{I12,I31,I32,I33,I34}, one has
\begin{align}
	&\frac{d}{dt}\big(\sum_{i=0}^2\tilde{E}_i\big)+\sum_{i=0}^2(\tilde{K}_i+G_{i})\leq C\bar{\delta}^{\frac{1}{2}}(1+t)^{-1}\big(\sum_{i=0}^2\tilde{E}_i\big)+C\bar{\delta}(1+t)^{-\frac{1}{2}}+C\bar{\delta}^{\frac{1}{2}}\norm{\Phi_x}^2\nonumber\\
	&\qquad\qquad\qquad\qquad\qquad\qquad\qquad+C\bar\delta^{\frac{1}{2}}\norm{\Phi_{xx}}^2+C\bar\delta^{\frac{1}{2}}K_2
	,\label{1}\\
	&\frac{d}{dt}\big(\sum_{i=1}^2\tilde{E}_i\big)+\sum_{i=1}^2(\tilde{K}_i+G_{i})\leq C\bar{\delta}^{\frac{1}{2}}(1+t)^{-1}\big(\sum_{i=1}^2\tilde{E}_i+G_0\big)+C\bar{\delta}(1+t)^{-\frac{3}{2}}\nonumber\\
	&\qquad\qquad\qquad\qquad\qquad\qquad\qquad+C\bar{\delta}^{\frac{1}{2}}(1+t)^{-2}\tilde{E}_0+C\bar{\delta}^{\frac{1}{2}}\norm{\Phi_{xx}}^2{+C\bar\delta^{\frac{1}{2}}K_2}
	,\label{2}\\
	&\frac{d}{dt}\tilde{E}_2+\tilde{K}_2+G_{2}\leq C\bar{\delta}^{\frac{1}{2}}\sum_{i=0}^
	{2}(1+t)^{-(3-i)}\tilde{E}_{i}+C\bar{\delta}(1+t)^{-\frac{5}{2}}+C\bar\delta^{\frac{1}{2}}K_2
	\nonumber\\
	&\qquad\qquad\qquad\qquad\qquad\qquad\qquad+C\bar{\delta}^{\frac{1}{2}}\big[(1+t)^{-2}G_0+(1+t)^{-1}G_1\big].
	\label{3}
\end{align}

{\bf Step 2.} We need to estimate $\norm{\px^{k}\Phi}^2$, $k=1,2,3$. {Taking $\p_x\cref{sys-pert}_{1}\times\frac{\mu}{\tilde v}\Phi_x-\cref{sys-pert}_2\times\Phi_x$}, 
one has
$$
\left(\frac{\mu}{2 \tilde{v}} \Phi_x^2\right)_t-\left(\frac{\mu}{2 \tilde{v}}\right)_t \Phi_x^2-\Phi_x \Psi_t+\frac{p_{+}}{\tilde{v}} \Phi_x^2=\left(\frac{R}{\tilde{v}} W_x-Q_1-\frac{\mu}{\tilde{v}} \tilde{R}_{1 x}\right) \Phi_x .
$$
Since
$$
\Phi_x \Psi_t=\left(\Phi_x \Psi\right)_t-\left(\Phi_t \Psi\right)_x+\Psi_x^2-\tilde{R}_1 \Psi_x,
$$
by \eqref{smallconstants} and \eqref{ansatz}, we obtain
\begin{align}\label{phi1}
\begin{aligned}
	& \left(\int \frac{\mu}{2 \tilde{v}} \Phi_x^2-\Phi_x \Psi d x\right)_t+\int \frac{p_{+}}{2 \tilde{v}} \Phi_x^2 d x  \leqslant C \int\left(\Psi_x^2+W_x^2\right) d x+C \int Q_1^2 d x+C\bar{\delta}(1+t)^{-\frac{3}{2}},
\end{aligned}
\end{align}
where by \cref{Q}, we have
\begin{align}\label{Q12}
\int Q_1^2 d x 
&{\le C\bar\delta K_1+C\bar\delta\tilde K_0+C\bar\delta(1+t)^{-\frac{3}{2}}}.
\end{align}
Combining \cref{phi1,Q12}, one has
\begin{align}\label{phix}
\left(\int \frac{\mu}{2 \tilde{v}} \Phi_x^2-\Phi_x \Psi d x\right)_t+\int \frac{p_{+}}{4 \tilde{v}} \Phi_x^2 d x 
&{\le C_1\bar\delta K_1+C_1 K_0+C_1\bar\delta(1+t)^{-\frac{3}{2}}},
\end{align}
where $C_1$ is a positive constant.
Next, {taking $\p_{xx}\cref{sys-Ori}_1\times\frac{\mu}{\tilde v}-\p_x\cref{sys-Ori}_2$, }
 one has
\begin{align}\label{phi2}
\frac{\mu}{\tilde{v}} \phi_{x t}-\psi_t-(p-\tilde{p})_x=-\left(\frac{\mu}{\tilde{v}}\right)_x \psi_x-\left[\left(\frac{\mu}{v}-\frac{\mu}{\tilde{v}}\right) u_x\right]_x+\tilde{R}_{2 x}-\frac{\mu}{\tilde{v}} \tilde{R}_{1 x x}.
\end{align}
Multiplying \cref{phi2} by $\phi_x$, we get
$$
\begin{aligned}
	& \left(\frac{\mu}{2 \tilde{v}} \phi_x^2\right)_t-\left(\frac{\mu}{2 \tilde{v}}\right)_t \phi_x^2-\psi_t \phi_x-(p-\tilde{p})_x \phi_x =\left\{-\left(\frac{\mu}{\tilde{v}}\right)_x \psi_x+\left(\frac{\mu \Phi_x}{v \tilde{v}} u_x\right)_x+\tilde{R}_{2 x}-\frac{\mu}{\tilde{v}} \tilde{R}_{1 x x}\right\} \phi_x .
\end{aligned}
$$
Note that
$$
-(p-\tilde{p})_x=\frac{\tilde{p}}{\tilde{v}} \phi_x-\frac{R}{\tilde{v}} \zeta_x+\left(\frac{p}{v}-\frac{\tilde{p}}{\tilde{v}}\right) v_x-\left(\frac{R}{v}-\frac{R}{\tilde{v}}\right) \theta_x,
$$
then similar to \cref{phix}, we have
\begin{align}\label{phixx}
\begin{aligned}
	& \left(\int \frac{\mu}{2 \tilde{v}} \phi_x^2-\phi_x \psi d x\right)_t+\int \frac{\tilde{p}}{2 \tilde{v}} \phi_x^2 d x\\
	&  \leqslant C_2 K_1+C_2 \bar{\delta}(1+t)^{-1} \tilde{E}_1+C_2 \bar{\delta}(1+t)^{-\frac{5}{2}}+C_2 {\bar\delta}
	 \int \psi_{x x}^2 d x,
\end{aligned}
\end{align}
where constant $C_2>0$.

Finally, we need to estimate $\norm{\phi_{xx}}^2$ and handle the highest order of derivatives with care. {Taking $\p_{xxx}\cref{sys-Ori}_1\times\frac{\mu}{\tilde v}-\p_{xx}\cref{sys-Ori}_2$, } one has 
\begin{align}\label{phi3}
\begin{aligned}
\frac{\mu}{\tilde{v}} \phi_{xx t}-\psi_{xt}-(p-\tilde{p})_{xx}=&-\left(\frac{\mu}{\tilde{v}}\right)_{xx} \psi_x-{2}\left(\frac{\mu}{\tilde{v}}\right)_{x} \psi_{xx}-\left[\left(\frac{\mu}{v}-\frac{\mu}{\tilde{v}}\right) u_x\right]_{xx}\\
&+\left(\frac{\mu}{\tilde v}\right)_x\tilde R_{1xx}+\tilde{R}_{2 xx}-\big(\frac{\mu}{\tilde{v}} \tilde{R}_{1 x x}\big)_x.
\end{aligned}
\end{align}
Multiplying \cref{phi3} by $\phi_{xx}$, we get
\begin{align}
\begin{aligned}
	& \left(\frac{\mu}{2 \tilde{v}} \phi_{xx}^2\right)_t-\left(\frac{\mu}{2 \tilde{v}}\right)_t \phi_{xx}^2-\psi_{xt} \phi_{xx}-(p-\tilde{p})_{xx} \phi_{xx} \\
	& \quad=\left\{-\left(\frac{\mu}{\tilde{v}}\right)_{xx} \psi_x-2\left(\frac{\mu}{\tilde{v}}\right)_{x} \psi_{xx}-\left[\left(\frac{\mu}{v}-\frac{\mu}{\tilde{v}}\right) u_x\right]_{xx}{+\left(\frac{\mu}{\tilde v}\right)_x\tilde R_{1xx}}+\tilde{R}_{2 xx}-\big(\frac{\mu}{\tilde{v}} \tilde{R}_{1 x x}\big)_x
	\right\} \phi_{x x} .
\end{aligned}
\end{align}
Note that
\begin{align}
	\begin{aligned}
-(p-\tilde{p})_{xx}=&\frac{\tilde{p}}{\tilde{v}} \phi_{xx}-\frac{R}{\tilde{v}} \zeta_{xx}+\big(\frac{\tilde{p}}{\tilde{v}}\big)_x \phi_{x}-\big(\frac{R}{\tilde{v}}\big)_x \zeta_{x}+\left(\frac{p}{v}-\frac{\tilde{p}}{\tilde{v}}\right)_x v_x\\
&-\left(\frac{R}{v}-\frac{R}{\tilde{v}}\right)_x \theta_x+\left(\frac{p}{v}-\frac{\tilde{p}}{\tilde{v}}\right) v_{xx}-\left(\frac{R}{v}-\frac{R}{\tilde{v}}\right)\theta_{xx},
	\end{aligned}
\end{align}
one has
\begin{align}\label{phixxx}
\begin{aligned}
	& \left(\int \frac{\mu}{2 \tilde{v}} \phi_{xx}^2-\phi_{xx} \psi_x d x\right)_t+\int \frac{\tilde{p}}{2 \tilde{v}} \phi_{xx}^2 d x  \\
	&\leqslant C_3 K_2+C_3 \bar{\delta}\bigg[(1+t)^{-1} \tilde{E}_2+(1+t)^{-2}\tilde{E}_{1}\bigg]+C_3 \bar{\delta}(1+t)^{-\frac{7}{2}},
\end{aligned}
\end{align}
where constant $C_3>0$.
For the highest order of derivatives, we have
\begin{align}
\begin{aligned}
	\int\left(\frac{\mu \phi}{v \tilde{v}} u_{x}\right)_{xx} \phi_{xx}d x &=	\int\left(\frac{\mu \phi}{v \tilde{v}}\right)_{xx} u_{x} \phi_{xx}+2\left(\frac{\mu \phi}{v \tilde{v}} \right)_{x}u_{xx} \phi_{xx}+\left(\frac{\mu \phi}{v \tilde{v}} \right)u_{xxx} \phi_{xx}d x ,\\
		\int\left(\frac{\mu \phi}{v \tilde{v}} \right)\psi_{xxx} \phi_{xx}d x &=\int\left(\frac{\mu \phi}{v \tilde{v}} \right)(\phi_{xxt}+\tilde{R}_{1xxx}) \phi_{xx}d x\\
		&=\int{\left(\frac{\mu\phi}{2v\tilde v}\phi_{xx}^2\right)_t-}\left(\frac{\mu \phi}{2v \tilde{v}} \right)_t{\phi^2_{xx}}+\left(\frac{\mu \phi}{v \tilde{v}} \right)\tilde{R}_{1xxx} \phi_{xx}d x,
\end{aligned}
\end{align}
where we have used 
 a $priori$ assumption ${\norm{\phi_x}_{L^\infty}\leq\sqrt 2\norm{\phi_x}_{L^2}^{\frac{1}{2}}\norm{\phi_{xx}}^{\frac{1}{2}}_{L^2}\leq \chi
(1+t)^{-\frac{1}{2}}}.$ 
	Obviously, it holds that 
\begin{align}
	E_i:=\bar C_{i+1}\tilde{E}_{i}+
	\hat C_{i+1}\int_{\R}\frac{\mu}{2\tilde{v}}\abs{\px^{i}\phi}^2-
	{\p_x^i\phi\p_x^i\Psi}
	dx>\int_{\R} \abs{\px^{i}b_1}^2+\abs{\px^{i}b_2}^2+\abs{\px^{i}b_3}^2+\abs{\px^{i}\phi}^2dx,
\end{align} 
where $i=0,1,2,$ and we choose suitably constants $\bar C_{i+1},\hat C_{i+1}$ satisfying
 $$\bar C_{i+1}>1,\ \ \hat C_{i+1}>1,\ 
 \  \frac{1}{4}\bar C_{i+1}-\hat C_{i+1}C_{i+1}-\hat C_{i+1}>0
 .$$
Then by \cref{phix,phixx,phixxx}, \cref{1,2,3} become
\begin{align}
	&\frac{d}{dt}\big(\sum_{i=0}^2E_i\big)+\sum_{i=0}^2(K_i+G_{i})\leq C\bar{\delta}^{\frac{1}{2}}(1+t)^{-1}\big(\sum_{i=0}^2{E}_i\big)+C\bar{\delta}(1+t)^{-\frac{1}{2}},\label{11}\\
	&\frac{d}{dt}\big(\sum_{i=1}^2{E}_i\big)+\sum_{i=1}^2(K_i+G_{i})\leq C\bar{\delta}^{\frac{1}{2}}(1+t)^{-1}\big(\sum_{i=1}^2{E}_i+G_0\big)+C\bar{\delta}(1+t)^{-\frac{3}{2}}+C\bar{\delta}^{\frac{1}{2}}(1+t)^{-2}{E}_0,\label{22}\\
	&\frac{d}{dt}{E}_2+{K}_2+G_{2}\leq C\bar{\delta}^{\frac{1}{2}}\sum_{i=0}^2(1+t)^{-(3-i)}{E}_{i}+C\bar{\delta}(1+t)^{-\frac{5}{2}}\nonumber\\
	&\qquad\qquad\qquad\qquad\qquad+C\bar{\delta}^{\frac{1}{2}}\big[(1+t)^{-2}G_0+(1+t)^{-1}G_1\big].\label{33}
\end{align}

 {\bf The proof of Theorem \ref{mt}.} 
In the following estimate, we use the fact that
\begin{align}
	\tilde{E}_1=O(1)K_0,\quad \tilde{E}_2=O(1)K_1.
\end{align}
By Gronwall's inequality and $\cref{11}\times(1+t)^{-C\bar\delta^{\frac{1}{2}}}$, one has
\begin{align}
\sum_{i=0}^2E_i\leq O(\varepsilon^2+\bar{\delta}^{\frac{1}{2}})(1+t)^{\frac{1}{2}},\qquad\int_{
0}^{ t}\sum_{i=0}^2(K_i+G_{i})d
{\tau}\leq O(\varepsilon^{2}+\bar{\delta}^{\frac{1}{2}})(1+t)^{\frac{1}{2}}.
\end{align}
Multiplying \cref{22} by $(1+t)$ and then integrating on $[0,t]$, one has
\begin{align}
	(1+t)\sum_{i=1}^2E_i+\int_{0}^t(1+\tau)\sum_{i=0}^2(K_i+G_{i})d\tau\leq O(\varepsilon^2+\bar{\delta}^{\frac{1}{2}})(1+t)^{\frac{1}{2}}.
\end{align}
Then it follows that
\begin{align}\label{E1}
	\sum_{i=1}^2E_i\leq O(\varepsilon^{2}+\bar{\delta}^{\frac{1}{2}})(1+t)^{-\frac{1}{2}},\qquad\int_{0}^t(1+\tau)\sum_{i=1}^2(K_i+G_{i})d\tau\leq O(\varepsilon^2+\bar{\delta}^{\frac{1}{2}})(1+t)^{\frac{1}{2}}.
\end{align}
Finally, multiplying \cref{33} by $(1+t)^2$, one has
\begin{align}
	(1+t)^2E_2+\int_{0}^t(1+\tau)^2(K_2+G_{2})d\tau\leq O(\varepsilon^2+\bar{\delta}^{\frac{1}{2}})(1+t)^{\frac{1}{2}},
\end{align}
ans it holds that
\begin{align}\label{E2}
	E_2\leq O(\varepsilon^2+\bar{\delta}^{\frac{1}{2}})(1+t)^{-\frac{3}{2}}.
\end{align}
Then the desired decay rate can be obtained  by \cref{WZEta,E1,E2}
\begin{align}
	\norm{\phi,\psi,\zeta}_{L^\infty}\le\norm{\phi,\psi,\zeta}^{\frac{1}{2}}\norm{\phi_x,\psi_x,\zeta_x}^{\frac{1}{2}}\leq O(\varepsilon+\bar{\delta}^{\frac{1}{4}})(1+t)^{-\frac{1}{2}}.
\end{align}
Hence, we get the convergence rate of \eqref{L2decay}-\eqref{decay} and close the a priori estimates \eqref{apa}. The proof of Theorem \ref{mt} is finished.
\end{proof}

\section{
Zero initial mass}
In this section, our purpose is to prove the Theorem \ref{mt0}. We first list the integrated system, and then show an important Poincar\'e type inequality about this perturbation system. 
By diagonalized system, we make the terms with poor decay rates disappear in the second equation, so that the bad terms are controlled by 
\begin{align}\label{1.5}
\int_{\mathbb R}\left|\frac{v_1^{-n}}{\lambda_3}\mathring b_3+
\frac{v_1^{n}}{\lambda_3}\mathring b_1\right|\left|{R}_1\right|dx\le C\bar\delta \mathring G_0+C\bar\delta(1+t)^{-1}.
\end{align}
 In contrast to the previous inequality \eqref{I32}, the decay rate here becomes $(1+t)^{-1}$, so we can get a better estimate. 
\subsection{Preliminaries}
By the introduction, we know that the quantities $(\mathring\Phi,\mathring\Psi,\mathring{\bar W})$ can be well defined in some Sobolev space through imposing $\mathring\Phi(\infty,0)=\mathring\Psi(\infty,0)=\mathring{\bar W}(\infty,0)=0.$ 
 Direct calculations yield the following relationship,
$$
(\phib, \psib)=(\mathring\Phi, \mathring\Psi)_x\quad \text{and}\quad \frac{R}{\gamma-1} \mathring\zeta+\frac{1}{2}\left|\mathring\Psi_x\right|^2+\bar{u} \mathring\Psi_x=\mathring{\bar{W}}_x.
$$
Similar to the case for non-zero initial excess mass, subtracting \eqref{cd} from equation \eqref{1.1} and integrating the resulting system yield 
the following integrated system,
\begin{align}\label{sys-Ori0}
\left\{\begin{array}{l}
	\mathring\Phi_t-\mathring\Psi_x=0
	, \\
	\mathring\Psi_t+p-\bar{p}=\frac{\mu}{v} u_x-\frac{\mu}{\bar{v}} \bar{u}_x-{R}_1, \\
	\mathring{\bar{W}}_t+p u-\bar{p} \bar{u}=\frac{\kappa}{v} \theta_x-\frac{\kappa}{\bar{v}} \bar{\theta}_x+\frac{\mu}{v} u u_x-\frac{\mu}{\bar{v}} \bar{u} \bar{u}_x-{R}_2.
\end{array}\right.
\end{align}
To capture the diffusive effect, we introduce
\begin{align}\label{WZEta0}
\mathring W=\frac{\gamma-1}{R}(\mathring{\bar{W}}-\bar{u} \mathring\Psi),\qquad
\Rightarrow\qquad 
\mathring\zeta=\mathring W_x-\mathring Y, \quad \text { with\quad } \mathring Y=\frac{\gamma-1}{R}\left(\frac{1}{2} \mathring\Psi_x^2-\bar{u}_x \mathring\Psi\right).
\end{align}
Using the new variable $\mathring W$ and linearizing the left-hand side of the system \eqref{sys-Ori0}
, we have
\begin{align}\label{sys-pert00}
\left\{\begin{array}{l}
	\mathring\Phi_t-\mathring\Psi_x=0
	, \\
	\mathring\Psi_t-\frac{p_{+}}{\bar{v}} \mathring\Phi_x+\frac{R}{\bar{v}} \mathring W_x=\frac{\mu}{\bar{v}} \mathring\Psi_{x x}+\mathring Q_1, \\
	\frac{R}{\gamma-1} \mathring W_t+p_{+} \mathring\Psi_x=\frac{\kappa}{\bar{v}} \mathring W_{x x}+\mathring Q_2,
\end{array}\right.
\end{align}
where
\begin{align}
&\begin{aligned}\label{J0}
	\mathring J_1=&\frac{\bar{p}-p_{+}}{\bar{v}} \mathring\Phi_x-\left[p-\bar{p}+\frac{\bar{p}}{\bar{v}} \mathring\Phi_x-\frac{R}{\bar{v}}(\theta-\bar{\theta})\right]=O(1)\left(\mathring\Phi_x^2+\mathring W_x^2+\mathring Y^2+|\bar{u}|^4
	\right), \\
	\mathring J_2=&\left(p_{+}-p\right) \mathring\Psi_x=O(1)\left(\mathring\Phi_x^2+\mathring\Psi_x^2+\mathring W_x^2+\mathring Y^2+|\bar{u}|^4
	\right), 
\end{aligned}\\
&\begin{aligned}\label{Q0}
	\mathring Q_1=&\left(\frac{\mu}{v}-\frac{\mu}{\bar{v}}\right) u_x+\mathring J_1+\frac{R}{\bar{v}} \mathring Y-{R}_1, \\
	\mathring Q_2=&\left(\frac{\kappa}{v}-\frac{\kappa}{\bar{v}}\right) \theta_x+\frac{\mu u_x}{v} \mathring\Psi_x-{R}_2-\bar{u}_t \mathring\Psi+\bar{u} {R}_1+\mathring J_2-\frac{\kappa}{\bar{v}} \mathring Y_x,
\end{aligned}
\end{align}
and 
\begin{align}\label{Jx0}
	\begin{aligned}
	{\mathring J_{1x}}=&O(\delta) D_{-\frac{1}{2}} \left(\mathring\Phi_x^2+\mathring W_x^2+\mathring Y^2+|\bar u_x|^2+|\bar u|^4
	\right)+{O(1)\left[
	|\bar u|^2\mathring\Phi_{xx}+\mathring\Phi_{xx}^2+\mathring W_{xx}^2+\mathring Y_x^2\right]},\\
	{\mathring J_{2x}}=&O(\delta) D_{-\frac{1}{2}} \left(\mathring\Phi_x^2+\mathring\Psi_x^2+\mathring W_x^2+\mathring Y^2+|\bar u_x|^2+|\bar u|^4
	\right)\\
	&+{O(1)\left[\left(\mathring\Phi_x+\mathring W_x+\mathring Y+|\bar u|^2
	\right)\mathring\Psi_{xx}+\left(\mathring\Phi_{xx}+\mathring W_{xx}+\mathring Y_x\right)\mathring\Psi_x\right]}.
	\end{aligned}
\end{align}

Similarly, we omit the local existence of the system \cref{sys-pert00} 
 for brevity. To prove \cref{mt0}, we present the following a priori assumptions,
\begin{align}\label{apa0}
\begin{aligned}
\mathring N(T)=\sup _{0 \leqslant t \leqslant T}\Big\{&\|(\mathring\Phi, \mathring\Psi, \mathring W)\|_{L^{\infty}}^2+(1+t)\ln^{-1}(t+2)
\|(\phib, \psib, \mathring\zeta)\|^2\\
&+(1+t)\ln^{-1}(t+2)
\|(\phib_x,\phib_{xx}, \psib_x, \mathring\zeta_x)\|^2\Big\} \leqslant {\chi^2},
\end{aligned}
\end{align}
where $\chi>0$ denotes a small constant depending on $\varepsilon$ and $\delta$. Motivated by \cite{HLM}, we define
\begin{align}\label{omega0}
\omega(x,t)=(1+t)^{-\frac{1}{2}}exp\left\{-\frac{\alpha x^2}{1+t}\right\}, \ \ g(x,t)=\int_{-\infty}^x\omega(y,t)dy,
\end{align}
where $\alpha>0$ will be determined later. By direct computation, it holds that
\begin{align}\label{heat1}
4\alpha g_t=\omega_x,\ \ \ \|g(\cdot,t)\|_{L^\infty}=\sqrt\pi\alpha^{-\frac{1}{2}}.
\end{align}
There is an elementary inequality concerning the heat kernel as follows.
\begin{Lem}\cite{HLM}\label{heat0}
For $0<T\le+\infty,$ assume that $h(x,t)$ satisfies 
\begin{align}
h_x\in L^2(0,T;L^2(\mathbb R)), \ \ h_t\in L^2(0,T;H^{-1}(\mathbb R)).
\end{align}
Then the following estimate holds:
\begin{align}
\begin{aligned}
\int_0^T\int_{\mathbb R}h^2\omega^2dxdt\le4\pi\|h(0)\|^2+4\pi\alpha^{-1}\int_0^T\|h_x(t)\|^2dt+8\alpha\int_0^T\langle{h_t,hg^2}\rangle_{H^{-1}\times H^1}dt.
\end{aligned}
\end{align}
\end{Lem}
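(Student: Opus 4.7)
The plan is to prove this weighted $L^2$ estimate via a single integration-by-parts identity that trades the weight $\omega^2$ for derivatives of the bounded antiderivative $g$. The starting point is the algebraic identity $\omega^2 = \partial_x(g\omega)-2\alpha(g^2)_t$, which follows immediately from $g_x=\omega$ and $4\alpha g_t=\omega_x$ as recorded in \cref{heat1}. This identity is the crux of the argument: it converts the non-signed weight $\omega^2$ into a spatial divergence plus a time derivative of the bounded function $g^2$, exploiting the uniform bound $\|g\|_{L^\infty}=\sqrt\pi\,\alpha^{-1/2}$.

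Next I would multiply by $h^2$ and integrate over $\mathbb{R}\times[0,T]$, then apply integration by parts in $x$ to the divergence term and in $t$ to the time-derivative term. The $x$-integration by parts produces a cross term of the form $-2\int_0^T\!\int h h_x g\omega\,dx\,dt$, which is controlled by Cauchy--Schwarz followed by Young's inequality with split $\tfrac12$: using $\|g\|_{L^\infty}^2=\pi\alpha^{-1}$, one absorbs $\tfrac12\int h^2\omega^2\,dx\,dt$ on the left while paying $2\pi\alpha^{-1}\int_0^T\|h_x\|^2\,dt$ on the right. The $t$-integration by parts produces a non-positive boundary term at $t=T$ that can be discarded, an initial-data term bounded by $2\alpha\|g(\cdot,0)\|_{L^\infty}^2\|h(\cdot,0)\|^2=2\pi\|h(\cdot,0)\|^2$, and the duality pairing $4\alpha\int_0^T\langle h_t,hg^2\rangle_{H^{-1}\times H^1}\,dt$. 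Doubling the resulting inequality then yields exactly the asserted bound.

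The only step requiring genuine care is the time integration by parts when $h_t$ lives merely in $H^{-1}$. The correct distributional identity is $\tfrac{d}{dt}\int h^2g^2\,dx=2\langle h_t,hg^2\rangle_{H^{-1}\times H^1}+\int h^2(g^2)_t\,dx$, which is legitimate because the hypotheses $h_x\in L^2(0,T;L^2(\mathbb{R}))$ together with the $L^\infty$ bounds on $g$ and $\omega$ ensure $hg^2\in L^2(0,T;H^1(\mathbb{R}))$, so the duality pairing is well defined. This identity is in turn justified by a standard mollification of $h$ in $t$, applying the smooth version to the mollified function, and passing to the limit using $h_t\in L^2(0,T;H^{-1})$. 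This is essentially the only analytic subtlety; everything else is algebra and one application of Young's inequality. I expect this distributional justification, rather than the computational part, to be the main obstacle, and it is precisely where the regularity hypothesis on $h_t$ is used in an essential way.
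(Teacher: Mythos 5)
Your proof is correct, and it is essentially the argument of the cited reference \cite{HLM}: the identity $\omega^2=(g\omega)_x-2\alpha(g^2)_t$ (equivalently, applying $g_x=\omega$ and $4\alpha g_t=\omega_x$ to $\tfrac{d}{dt}\int h^2g^2\,dx$), one integration by parts in $x$, Young's inequality with the bound $\|g\|_{L^\infty}^2=\pi\alpha^{-1}$, and discarding the non-positive boundary term at $t=T$. The mollification-in-time justification of the distributional identity $\tfrac{d}{dt}\int h^2g^2\,dx=2\langle h_t,hg^2\rangle_{H^{-1}\times H^1}+\int h^2(g^2)_t\,dx$ is the standard Lions--Magenes argument and is handled the same way there.
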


Then we give the following Poincar\'e type inequality, which plays an important role in our energy estimates. 
\begin{Lem}\cite{HLM,Yang1}\label{lem5}
For $\alpha\in(0,\frac{c_2}{4}]$ with $c_2$ as in \eqref{cde}, $\omega$ defined in \eqref{omega0}, and assume \eqref{apa0} holds. Then there exists some positive constant $C$ depending on $\alpha$ such that 
\begin{align}\label{poin8}
\int_0^t\int_{\mathbb R}\left(\mathring\Phi^2+\mathring\Psi^2+\mathring W^2\right)\omega^2dxd\tau\le C+C\int_0^t\|(\mathring\Phi,\mathring\Psi,\mathring W)_x\|^2d\tau{+C\int_0^t
\|(\mathring\Psi,\mathring W)_{xx}\|^2d\tau}.
\end{align}
\end{Lem}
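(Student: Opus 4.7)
The plan is to apply \cref{heat0} to each of $h=\mathring\Phi$, $h=\mathring\Psi$, $h=\mathring W$ separately, reduce the duality pairing $\langle h_t,hg^2\rangle_{H^{-1}\times H^1}$ via the equations in \cref{sys-pert00}, and then absorb the resulting terms into the right-hand side using integration by parts together with the a priori assumption \eqref{apa0}. Since $g$ is bounded by $\sqrt{\pi}\alpha^{-1/2}$ in $L^\infty$ and $g_x=\omega$, any pointwise quadratic term multiplied by $g^2$ or by $g\,\omega$ can be controlled either by the derivative terms on the right-hand side of \eqref{poin8} or, after one more integration by parts, by itself multiplied by a small constant.

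First I would take $h=\mathring\Phi$. Because $\mathring\Phi_t=\mathring\Psi_x$ by $\eqref{sys-pert00}_1$, the duality pairing becomes $\int\mathring\Psi_x\mathring\Phi g^2\,dx$, which is bounded by $\|\mathring\Phi_x\|^2+\|\mathring\Psi_x\|^2$ plus a boundary-type term that vanishes in space; this immediately yields the bound for $\mathring\Phi$. Next I would take $h=\mathring\Psi$ and use $\eqref{sys-pert00}_2$ to rewrite
\begin{align*}
\langle\mathring\Psi_t,\mathring\Psi g^2\rangle=\int\left(\tfrac{p_+}{\bar v}\mathring\Phi_x-\tfrac{R}{\bar v}\mathring W_x+\tfrac{\mu}{\bar v}\mathring\Psi_{xx}+\mathring Q_1\right)\mathring\Psi g^2\,dx.
\end{align*}
The linear terms are handled by integration by parts, producing $\|\mathring\Phi_x\|^2+\|\mathring W_x\|^2+\|\mathring\Psi_x\|^2$ plus terms of the form $\int(\mathring\Phi,\mathring W,\mathring\Psi)\mathring\Psi\,\omega\,g\,dx$, which are absorbed into the left-hand side by Cauchy-Schwarz with a small weight. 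The viscous term $\tfrac{\mu}{\bar v}\mathring\Psi_{xx}$ contributes $\|\mathring\Psi_{xx}\|^2$ after one integration by parts, hence the $\|\mathring\Psi_{xx}\|^2$ on the right-hand side of \eqref{poin8}. The nonlinear remainder $\mathring Q_1$ is handled by \cref{Q0}: each piece is quadratic in $(\mathring\Phi_x,\mathring\Psi_x,\mathring W_x,\mathring Y)$ plus lower-order ansatz errors bounded by $D_{-1}$, and $\mathring Y$ is controlled by $\mathring\Psi_x^2+\bar u_x^2|\mathring\Psi|^2$; the $\bar u_x\mathring\Psi$ term is absorbed because $|\bar u_x|\lesssim\delta\omega$, so $\int|\bar u_x\mathring\Psi|^2g^2dx\lesssim\delta^2\int\mathring\Psi^2\omega^2dx$ can be absorbed into the left-hand side under smallness.

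Analogously I would take $h=\mathring W$ and use $\eqref{sys-pert00}_3$, producing $\|\mathring W_{xx}\|^2$ and $\|\mathring\Psi_x\|^2$ plus similar cross and nonlinear terms estimated by the same scheme, with $\mathring Q_2$ handled through \cref{Q0} (noting in particular that the $\bar u_t\mathring\Psi$ piece satisfies $|\bar u_t|\lesssim \delta\omega$ so it is also absorbable). Summing the three estimates and choosing $\alpha\le c_2/4$ so that $\omega^2\lesssim e^{-c_2 x^2/(1+t)}(1+t)^{-1}$ dominates the gaussian weights appearing in $R_1,R_2$ (see \cref{cde,vr,vr2}), the ansatz-error contributions are time-integrable and produce the constant $C$ on the right-hand side of \eqref{poin8}.

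The main obstacle I anticipate is the treatment of the terms arising from $\mathring Q_1$ and $\mathring Q_2$: although all of them are quadratic, some are only quadratic in anti-derivatives (via $\mathring Y$ containing $\bar u_x\mathring\Psi$, and via the $\bar u_t\mathring\Psi$ term in $\mathring Q_2$), so naively they would be of the same order as the left-hand side of \eqref{poin8}. These have to be absorbed using the smallness of $\delta$ together with the pointwise bound $|\bar u_x|+|\bar u_t|\lesssim\delta\omega$ coming from \cref{cde,c}, which ensures that the corresponding weighted integrals are controlled by a small multiple of $\int\mathring\Psi^2\omega^2$. A secondary technical point is to justify the duality pairing: the regularity of $(\mathring\Phi,\mathring\Psi,\mathring W)_t$ as $H^{-1}$-valued distributions follows from \eqref{sys-pert00} together with the a priori assumption \eqref{apa0}, which also guarantees the integrability needed to apply \cref{heat0}.
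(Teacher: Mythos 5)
Your plan to apply \cref{heat0} to each of $\mathring\Phi$, $\mathring\Psi$, $\mathring W$ separately has a genuine gap in the treatment of the \emph{linear} cross terms, not (as you anticipated) in $\mathring Q_1$, $\mathring Q_2$. Take $h=\mathring\Phi$. The pairing is $\int\mathring\Psi_x\mathring\Phi g^2\,dx$, which you claim is bounded by $\|\mathring\Phi_x\|^2+\|\mathring\Psi_x\|^2$. It is not: integrating by parts gives $-\int\mathring\Psi\mathring\Phi_x g^2\,dx-2\int\mathring\Psi\mathring\Phi g\omega\,dx$. The first piece is of order $\|\mathring\Psi\|\,\|\mathring\Phi_x\|$, and under \eqref{apa0} one only knows $\|\mathring\Psi\|\lesssim\ln^{1/2}(2+t)$, so its time integral is not controlled by the right-hand side of \eqref{poin8} (it contributes $\int_0^t\ln(2+\tau)\,d\tau\sim t\ln t$). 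The second piece, $\int\mathring\Psi\mathring\Phi g\omega\,dx$, is quadratic in anti-derivatives with only a \emph{single} factor of $\omega$ (since $g$ is merely bounded, not decaying), so neither can it be absorbed into $\int\mathring\Phi^2\omega^2\,dx$ nor into derivative terms. The analogous obstruction appears for $h=\mathring\Psi$ through $\int\frac{p_+}{\bar v}\mathring\Phi_x\mathring\Psi g^2\,dx$ and $\int\frac{R}{\bar v}\mathring W_x\mathring\Psi g^2\,dx$, and for $h=\mathring W$ through $\int p_+\mathring\Psi_x\mathring W g^2\,dx$. The first equation of \eqref{sys-pert00} has no dissipation, and the transport terms in the second and third equations produce precisely this type of un-absorbable anti-derivative$\times$anti-derivative$\times g\omega$ interaction.

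The paper circumvents this by exploiting the structure of the system in two distinct steps. First, instead of applying \cref{heat0} to $\mathring\Psi$, it multiplies $\eqref{sys-pert00}_2$ by $(R\mathring W-p_+\mathring\Phi)\bar v f$ with $f=\int_{-\infty}^x\omega^2\,dy$, which (unlike $g^2$) enjoys the decay $\|f\|_{L^\infty}\lesssim(1+t)^{-1/2}$; since $-\frac{p_+}{\bar v}\mathring\Phi_x+\frac{R}{\bar v}\mathring W_x=\frac{1}{\bar v}(R\mathring W-p_+\mathring\Phi)_x$, this produces $\int(p_+\mathring\Phi-R\mathring W)^2\omega^2\,dx$ directly, and the time-integration-by-parts of $J_1$ yields the sign-definite $-\int\mathring\Psi^2\omega^2\,dx$ contribution. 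Second, \cref{heat0} is applied only to the special combination $h=R\mathring W+(\gamma-1)p_+\mathring\Phi$, for which the transport terms cancel exactly: by $\eqref{sys-pert00}_{1,3}$, $(\frac{R}{\gamma-1}\mathring W+p_+\mathring\Phi)_t=\frac{\kappa}{\bar v}\mathring W_{xx}+\mathring Q_2$, see \eqref{poin9}. Hence the pairing $\langle h_t,hg^2\rangle$ only produces terms carrying at least one spatial derivative, which after one integration by parts land in $\eta\int h^2\omega^2\,dx+C_\eta\|\mathring W_x\|^2+\cdots$, and the $\eta$-term absorbs into the left-hand side. Finally, the two combinations $p_+\mathring\Phi-R\mathring W$ and $R\mathring W+(\gamma-1)p_+\mathring\Phi$ together with $\mathring\Psi$ span $(\mathring\Phi,\mathring\Psi,\mathring W)$, so adding \eqref{poin7} and \eqref{poin14} and choosing $\beta$ and $\delta$ small yields \eqref{poin8}. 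Your proposal would need to be reorganized around this cancellation and the decaying weight $f$; as written the three separate applications of \cref{heat0} cannot be closed.
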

\begin{proof}
Defining
$$f=\int_{-\infty}^x\omega^2dy,$$ one has
\begin{align}\label{h2}
\|f(\cdot,t)\|_{L^\infty}\le 2\alpha^{-\frac{1}{2}}(1+t)^{-\frac{1}{2}},\ \ \ \|f_t(\cdot, t)\|_{L^\infty}\le4\alpha^{-\frac{1}{2}}(1+t)^{-\frac{3}{2}}.
\end{align}
Multiplying \eqref{sys-pert00}$_2\times(R\mathring W-P_+\mathring\Phi)\bar vf$ and integrating the resulting equation over $\mathbb R$ leads to 
\begin{align}\label{poin5}
\begin{aligned}
\frac{1}{2}\int_{\mathbb R}(p_+\mathring\Phi-R\mathring W)^2\omega^2
dx
&=\int_{\mathbb R}\mathring\Psi_t(R\mathring W-p_+\mathring\Phi)\bar vfdx-\int_{\mathbb R}(R\mathring W-p_+\mathring\Phi)^2\frac{\bar v_x}{\bar v}fdx\\
&\quad-\int_{\mathbb R}\mathring Q_1(R\mathring W-p_+\mathring\Phi)\bar vfdx+\int_{\mathbb R}\mu\mathring\Psi_x\left\{(R\mathring W-p_+\mathring\Phi)f\right\}_xdx\\
&\quad-\int_{\mathbb R}\left[\left(\frac{R}{\bar v}\right)_x\mathring W-\left(\frac{p_+}{\bar v}\right)_x\mathring\Phi\right](R\mathring W-p_+\mathring\Phi)\bar vfdx:=\sum_{i=1}^{5}J_i.
\end{aligned}
\end{align}
We estimate $J_i$ as follows:
\begin{align}
\begin{aligned}
J_1&=\left(\int_{\mathbb R}\mathring\Psi(R\mathring W-p_+\mathring\Phi)\bar vfdx\right)_t-\int_{\mathbb R}\mathring\Psi(R\mathring W-p_+\mathring\Phi)_t\bar vfdx-\int_{\mathbb R}\mathring\Psi(R\mathring W-p_+\mathring\Phi)\left(\bar vf\right)_tdx\\
&:=\sum_{i=1}^3J_1^i.
\end{aligned}
\end{align}
By \eqref{sys-pert00}$_{1,3}$, \eqref{Q0}-\eqref{Jx0}, {choosing suitably small $\delta>0$} and $\alpha\in(0,\frac{c_2}{4}]$, we get 
\begin{align}\label{poin1}
\begin{aligned}
J_1^2&=-(\gamma-1)\kappa\int_{\mathbb R}\mathring\Psi\mathring W_{xx}fdx-(\gamma-1)\int_{\mathbb R}\mathring\Psi \mathring Q_2\bar vfdx+\frac{\gamma p_+}{2}\int_{\mathbb R}(\mathring\Psi^2)_x\bar vfdx\\
&\le C\int_{\mathbb R}\left|f\mathring\Psi_x\mathring W_x\right|+\left|\mathring\Psi\mathring W_xf_x\right|dx+\int_{\mathbb R} \Big|\left(\frac{\kappa}{v}-\frac{\kappa}{\bar{v}}\right) \theta_x+\frac{\mu u_x}{v} \mathring\Psi_x-{R}_2-\bar{u}_t \mathring\Psi+\bar{u} {R}_1+\mathring J_2\\
&\qquad-\frac{\kappa}{\bar{v}} \mathring Y_x\Big|\left|\mathring\Psi\bar vf\right|dx-\frac{\gamma p_+}{2}\int_{\mathbb R}\mathring\Psi^2\bar vf_xdx-\frac{\gamma p_+}{2}\int_{\mathbb R}\mathring\Psi^2\bar v_xfdx\\
&\le C\|f\|_{L^\infty}\|\mathring\Psi_x\|\|\mathring W_x\|+C\|\mathring\Psi\|_{L^\infty}\|\mathring W_x\|\|\omega^2\|+C\|f\|_{L^\infty}\|\mathring\Psi\|_{L^\infty}\Big[\|\mathring\Phi_x\|^2+\|\mathring\Psi_x\|^2+\|\mathring W_x\|^2\\
&\qquad+\int_{\mathbb R}(\mathring Y^2+|\bar u|^4)dx+\|\mathring\Phi_x\|\|\theta_x\|+\|\mathring\Psi_x\|\|u_x\|+\|R_2\|_{L^1}+\|\mathring\Psi\|_{L^\infty}\|\bar u_t\|_{L^1}+\|\bar uR_1\|_{L^1}\\
&\qquad+\|\mathring\Psi_x\|\|\mathring\Psi_{xx}\|+\int_{\mathbb R}(|\bar u_{xx}\mathring\Psi|+|\bar u_x\mathring\Psi_x|)dx\Big]-\frac{\gamma p_+}{2}\int_{\mathbb R}\mathring\Psi^2\bar v\omega^2dx+C\delta\int_{\mathbb R}\mathring\Psi^2\omega^2dx\\
&\le C\|(\mathring\Phi,\mathring\Psi,\mathring W)_x\|^2+{C\|(\mathring\Psi,\mathring W)_{xx}\|^2}
+C\delta(1+t)^{-\frac{3}{2}}-c
\int_{\mathbb R}\mathring\Psi^2
\omega^2dx,
\end{aligned}
\end{align}
where 
we use the fact that
\begin{align}\label{poin9}
\left(\frac{R}{\gamma-1}\mathring W+p_+\mathring\Phi\right)_t=\frac{\kappa}{\bar v}\mathring W_{xx}+\mathring Q_2.
\end{align}
Using a priori assumptions, one has 
\begin{align}\label{poin2}
J_1^3\le C\|(\bar vf)_t\|_{L^\infty}\|\mathring\Psi\|(\|\mathring W\|+\|\mathring\Phi\|)\le C(1+t)^{-\frac{5}{4}},
\end{align}
where we claim that $\|(\mathring\Phi,\mathring\Psi,\mathring W)\|^2\le Cln(1+t)\le C(1+t)^{\frac{1}{4}}$ by \eqref{apa0}.
Then we have
\begin{align}\label{poin4}
\begin{aligned}
J_1\le &\left(\int_{\mathbb R}\mathring\Psi(R\mathring W-p_+\mathring\Phi)\bar vfdx\right)_t-c\int_{\mathbb R}\mathring\Psi^2\omega^2dx+C\|(\mathring\Phi,\mathring\Psi,\mathring W)_x\|^2\\
&+C\|(\mathring\Psi,\mathring W)_{xx}\|^2+C(1+t)^{-\frac{5}{4}}.
\end{aligned}
\end{align}
From \eqref{cde}, \eqref{omega0} and \eqref{h2}, we deduce that
\begin{align}
J_2+J_5\le C\|f\|_{L^\infty}\int_{\mathbb R}(\mathring\Phi^2+\mathring W^2)\delta(1+t)^{-\frac{1}{2}}e^{-\frac{c_2x^2}{1+t}}dx\le C\delta\int_{\mathbb R}(\mathring\Phi^2+\mathring W^2)\omega^2dx.
\end{align}
Similar to \eqref{poin1}, we have
\begin{align}
\begin{aligned}
J_4&\le C\|f\|_{L^\infty}\|\mathring\Psi_x\|\left(\|\mathring W_x\|+\|\mathring\Phi_x\|\right)+C\|f_x\|_{L^\infty}\|\mathring\Psi_x\|\left(\|\mathring W\|+\|\mathring\Phi\|\right)\\
&\le C(1+t)^{-\frac{1}{2}}\|(\mathring\Phi,\mathring\Psi,\mathring W)_x\|^2+C(1+t)^{-\frac{3}{2}}\|(\mathring\Phi,\mathring W)\|^2\\
&\le C\|(\mathring\Phi,\mathring\Psi,\mathring W)_x\|^2+C(1+t)^{-\frac{5}{4}}.
\end{aligned}
\end{align}
Finally, we obtain by \eqref{Q0} that
\begin{align}\label{poin3}
\begin{aligned}
J_3&\le C\int_{\mathbb R}\left|\left(\frac{\mu}{v}-\frac{\mu}{\bar{v}}\right) u_x+\mathring J_1+\frac{R}{\bar{v}} \mathring Y-{R}_1\right|\left|\left(R\mathring W-p_+\mathring\Phi\right)\bar vf\right|dx\\
&\le C\|(\mathring\Phi,\mathring\Psi,\mathring W)_x\|^2+C\delta\int_{\mathbb R}(\mathring\Phi^2+\mathring\Psi^2+\mathring W)\omega^2dx+C\|\mathring\Psi_{xx}\|^2+C\delta(1+t)^{-\frac{3}{2}}.
\end{aligned}
\end{align}
Together with all the estimates \eqref{poin4}-\eqref{poin3}, we conclude from \eqref{poin5} that
\begin{align}\label{poin6}
\begin{aligned}
\frac{1}{2}&\int_{\mathbb R}(p_+\mathring\Phi-R\mathring W)^2\omega^2
dx+c\int_{\mathbb R}\mathring\Psi^2\omega^2dx\\
&\le \left(\int_{\mathbb R}\mathring\Psi(R\mathring W-p_+\mathring\Phi)\bar vfdx\right)_t+C\delta\int_{\mathbb R}(\mathring\Phi^2+\mathring\Psi^2+\mathring W^2)\omega^2dx\\
&\qquad+C\|(\mathring\Phi,\mathring\Psi,\mathring W)_x\|^2+C\|(\mathring\Psi,\mathring W)_{xx}\|^2+C(1+t)^{-\frac{5}{4}}.
\end{aligned}
\end{align}
Integrating \eqref{poin6} over $(0,t)$, one has
\begin{align}\label{poin7}
\begin{aligned}
&\int_0^t\int_{\mathbb R}(p_+\mathring\Phi-R\mathring W)^2\omega^2
dxd\tau+
\int_0^t\int_{\mathbb R}\mathring\Psi^2\omega^2dxd\tau\\
&\le C+C\delta\int_0^t\int_{\mathbb R}(\mathring\Phi^2+\mathring W^2)\omega^2dxd\tau+C\int_0^t\|(\mathring\Phi,\mathring\Psi,\mathring W)_x\|^2d\tau+C\int_0^t\|(\mathring\Psi,\mathring W)_{xx}\|^2d\tau.
\end{aligned}
\end{align}

In order to get \eqref{poin8},  we need to 
deduce another main estimate by using Lemma \ref{heat0}. Taking $h=R\mathring W+(\gamma-1)p_+\mathring\Phi$ in lemma \ref{heat0} and use \eqref{poin9}, we obtain
\begin{align}\label{poin11}
\langle{h_t,hg^2}\rangle_{H^{-1}\times H^1}=(\gamma-1)\int_{\mathbb R}\frac{\kappa}{\bar v}\mathring W_{xx}hg^2dx+(\gamma-1)\int_{\mathbb R} \mathring Q_2hg^2dx.
\end{align}
Using an integration by part about $x$, we have
\begin{align}\label{poin10}
\begin{aligned}
\int_{\mathbb R}\frac{\kappa}{\bar v}\mathring W_{xx}hg^2dx&=-\int_{\mathbb R}\left(\frac{\kappa}{\bar v}\right)_x\mathring W_{x}hg^2dx-\int_{\mathbb R}\frac{\kappa}{\bar v}\mathring W_{x}h_xg^2dx-2\int_{\mathbb R}\frac{\kappa}{\bar v}\mathring W_{x}hg\omega dx\\
&\le C(\delta+\beta)\int_{\mathbb R}(\mathring\Phi^2+\mathring W^2)\omega^2dx+C_{\beta}\|\mathring W_x\|^2+C\|\mathring\Phi_x\|^2,
\end{aligned}
\end{align}
where $\beta>0$ will be determined later and constant $C_\beta$ is only depending on $\beta$.
Now we turn to estimate the last term of \eqref{poin11}. Recalling the definition of $\mathring Q_2$ in \eqref{Q0} and performing the same methods used as \eqref{poin1}, we obtain by direct calculation
\begin{align}\label{poin12}
\begin{aligned}
\int_{\mathbb R} &\left|\mathring Q_2\right|\left|hg^2\right|dx\le\int_{\mathbb R}\left|\left(\frac{\kappa}{v}-\frac{\kappa}{\bar{v}}\right) \theta_x+\frac{\mu u_x}{v} \mathring\Psi_x-{R}_2-\bar{u}_t \mathring\Psi+\bar{u} {R}_1+\mathring J_2-\frac{\kappa}{\bar{v}} \mathring Y_x\right|\left|hg^2\right|dx\\
&\le C\|g^2\|_{L^\infty}\|\mathring\Phi_{x}\|\|\bar\theta_xh\|+C\|hg^2\|_{L^\infty}\|\mathring\Phi_x\|\|\mathring\zeta_x\|+C\|hg^2\|_{L^\infty}\|\mathring\Psi_x\|\|u_x\|+C \delta(1+t)^{-\frac{3}{2}}\\
&\qquad+C\int_{\mathbb R}(|\mathring W|+|\mathring\Phi|)\delta(1+t)^{-\frac{3}{2}}e^{-\frac{c_2x^2}{1+t}}dx+C\|(\mathring\Phi,\mathring\Psi,\mathring W)_x\|^2+C(\|\mathring\Psi_x\|^2+\|\mathring\Psi_{xx}\|^2)\\
&\le C\delta\int_{\mathbb R}(\mathring\Phi^2+\mathring W^2)\omega^2dx+C\|(\mathring\Phi,\mathring\Psi,\mathring W)_x\|^2+C\|(\mathring\Psi,\mathring W)_{xx}\|^2+C \delta(1+t)^{-\frac{3}{2}}.
\end{aligned}
\end{align}
Substituting \eqref{poin10}-\eqref{poin12} into \eqref{poin11}, choosing any suitably small $\beta>0$, one has
\begin{align}\label{poin13}
\begin{aligned}
\langle{h_t,hg^2}\rangle_{H^{-1}\times H^1}\le &C(\delta+\beta)\int_{\mathbb R}(\mathring\Phi^2+\mathring W^2)\omega^2dx+C_\beta\|(\mathring\Phi,\mathring\Psi,\mathring W)_x\|^2\\
&+C\|(\mathring\Psi,\mathring W)_{xx}\|^2+C \delta(1+t)^{-\frac{3}{2}}.
\end{aligned}
\end{align}
It follows from Lemma \ref{heat0} and \eqref{poin13} that 
\begin{align}\label{poin14}
\begin{aligned}
\int_0^t&\int_{\mathbb R}(R\mathring W+(\gamma-1)p_+\mathring\Phi)^2\omega^2dxd\tau\\
&\le C+C(\delta+\beta)\int_0^t\int_{\mathbb R}(\mathring\Phi^2+\mathring W^2)\omega^2dx+C_\beta\int_0^t\|(\mathring\Phi,\mathring\Psi,\mathring W)_x\|^2d\tau+C\int_0^t\|(\mathring\Psi,\mathring W)_{xx}\|^2d\tau.
\end{aligned}
\end{align}
In fact, adding \eqref{poin7} to \eqref{poin14} and taking first $\beta$ then $\delta$ suitably small thus implies \eqref{poin8}. This completes the proof of Lemma \ref{lem5}.

\end{proof}

Denoting $\mathring{\mathcal{W}}=(\mathring\Phi, \mathring\Psi, \mathring W)^t$, one has,
\begin{align}\label{equ-W0}
\mathring{\mathcal{W}}_t+A_1 \mathring{\mathcal{W}}_x=A_2 \mathring{\mathcal{W}}_{x x}+\mathring A_3,
\end{align}
where
$$\mathring A_3=\left(0,\mathring Q_1,\frac{\gamma-1}{R}\mathring Q_2\right)^t.$$
Let 
\begin{align}
\mathring B=L\mathring{\mathcal{W}}=(\mathring b_1,\mathring b_2,\mathring b_3)^t,
\end{align}
Multiplying the equations \eqref{equ-W0} by the matrix $L$ yields
\begin{align}\label{mat0}
\mathring B_t+\Lambda\mathring B_x=LA_2R\mathring B_{xx}+2LA_2R_x\mathring B_x+\left[(L_t+\Lambda L_x)R+LA_2R_{xx}\right]\mathring B+L\mathring A_3.
\end{align}
Applying $\px^k$, $k=0,1,2$ to \cref{mat0}, one has
\begin{align}\label{equ-Bk0}
	\begin{aligned}
		\px^k\mathring B_t+\Lambda \px^k\mathring B_x=L A_2 R \px^k\mathring B_{x x}+\mathring{\mathcal{M}}_k,
	\end{aligned}
\end{align}
where
\begin{align}
	\begin{aligned}
	\mathring{\mathcal{M}}_k:=&\sum_{j=1}^{k}\bigg[-\px^j\Lambda\px^{k-j+1}\mathring B+\px^j(LA_2R)\px^{k-j+2}\mathring B\bigg]+2\sum_{i=0}^{k}\px^i\big(L A_2 R_x\big) \px^{k-i+1}\mathring B\\
	&+\sum_{i=0}^{k}\bigg\{\px^i\big[\left(L_t+\Lambda L_x\right) R+L A_2 R_{x x}\big] \px^{k-i}\mathring B+\px^{i}L \px^{k-i}\mathring A_3 \bigg\}\\
	\le&O(1)\sum_{j=1}^kD_{-\frac{j}{2}}
	{\big(\abs{\px^{k-j+1}\mathring b_1}+\abs{\px^{k-j+1}\mathring b_3}\big)}+\sum_{j=1
	}^{k+2}|D_{-\frac{j}{2}}\px^{k-j+2}\mathring B|+\sum_{i=0}^{k}|\px^{i}L \px^{k-i}\mathring A_3|\\
	:=&\sum_{i=1}^{3}|\mathring{\mathcal{M}}_{k}^{(i)}|.
\end{aligned}
\end{align}

\subsection{Energy estimates}\label{Ees}
To prove the 
 Theorem \ref{mt0}, it is only needed to get the following a priori estimates. 
\begin{Prop} 
	Under the same assumptions as \cref{mt0}, let $(\mathring\Phi,\mathring\Psi,\mathring W,\mathring\zeta)$ be the solutions of \cref{sys-pert00} in $[0,t]$ satisfying \cref{apa0}. Then it holds that
	\begin{align}
	\begin{aligned}
\|(\mathring\Phi,& \mathring\Psi, \mathring W)\|_{L^{\infty}}^2+(1+t)
\ln^{-1}(t+2)\|(\phib, \psib, \mathring\zeta)\|^2+(1+t)^2\ln^{-1}(t+2)
\|(\phib_x,\phib_{xx}, \psib_x, \mathring\zeta_x)\|^2\\
&\leq O(\varepsilon_0^2+\delta_0^{\frac{1}{2}}).
\end{aligned}
\end{align}
\end{Prop}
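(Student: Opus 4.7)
The plan is to adapt the three-level diagonalized energy framework of Section \ref{Ees1} to the zero-mass setting, exploiting two structural gains that are unavailable in the non-zero mass case. The first gain is a cancellation in the linearly degenerate mode: the worst residual $R_1$ in \eqref{vr} enters only the momentum equation, so after diagonalization $\mathring B=L\mathring{\mathcal W}$ it couples only with the acoustic modes $\mathring b_1,\mathring b_3$ and bypasses $\mathring b_2$. This yields \eqref{1.5}, where the pairing $\|R_1\|\|\mathring b_{1,3}\|$ contributes only $C\bar\delta\mathring G_0+C\bar\delta(1+t)^{-1}$, one power of $(1+t)^{-1/2}$ better than the analogous bound \eqref{I32}. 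The second gain is the weighted Poincar\'e-type inequality \eqref{poin8} of Lemma \ref{lem5}, which lets me reabsorb localized low-frequency errors into the higher-derivative dissipation.

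First I would perform the weighted $L^2$ estimate for $\partial_x^k\mathring B$ at $k=0,1,2$ against the multiplier $(v_1^n\partial_x^k\mathring b_1,\partial_x^k\mathring b_2,v_1^{-n}\partial_x^k\mathring b_3)$ with $n\sim\delta^{-1/2}$, exactly as at the start of Section \ref{Ees1}. The time-derivative of $\mathring{\tilde E}_k$ produces the viscous dissipation $\mathring{\tilde K}_k$, the intrinsic contact-wave dissipation $\mathring G_k$, and source terms controlled as in \eqref{I12}--\eqref{I34}, with the single essential change that $R_1$ in $L\mathring A_3$ is absorbed through \eqref{1.5}. The remaining pieces of $\mathring Q_1,\mathring Q_2$ in \eqref{Q0} are quadratic or involve $\bar u,\bar u_x$ and hence decay like a heat-kernel mass. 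The auxiliary $\phib$-weighted combinations mirroring \eqref{phix}, \eqref{phixx}, \eqref{phixxx} must be redone for $\phib,\phib_x,\phib_{xx}$ to recover the missing $\|\phib_x\|^2,\|\phib_{xx}\|^2,\|\phib_{xxx}\|^2$ dissipation that $\mathring{\tilde K}_k$ does not see.

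Second, after assembling a three-level system analogous to \eqref{11}--\eqref{33}, I would observe that the low-order source terms $\bar u_t\mathring\Psi$ and $\bar u R_1$ generate a weighted integral of the form $\int(\mathring\Phi^2+\mathring\Psi^2+\mathring W^2)\omega^2\,dx$ after adjusting the Gaussian parameter so that $\alpha\le c_2/4$. Invoking Lemma \ref{lem5} absorbs this weighted integral into $\int_0^t\|(\mathring\Phi,\mathring\Psi,\mathring W)_x\|^2+\|(\mathring\Psi,\mathring W)_{xx}\|^2\,d\tau$, which is itself bounded by $\int_0^t(\mathring K_k+\mathring G_k)\,d\tau$. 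Together with the smallness of $\bar\delta$, this closes the feedback loop that otherwise would have prevented the bootstrap.

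Third, I would multiply the $k$-th energy inequality by $(1+t)^k$ and integrate, producing after Gronwall the bounds $\mathring E_0\le C(\e_0^2+\bar\delta^{1/2})\ln(2+t)$, $\mathring E_1\le C(\e_0^2+\bar\delta^{1/2})(1+t)^{-1}\ln(2+t)$, $\mathring E_2\le C(\e_0^2+\bar\delta^{1/2})(1+t)^{-2}\ln(2+t)$; the $\ln(2+t)$ factor is unavoidable because the absolute constant $C$ in \eqref{poin8} contributes one unit per unit time. Combined with Sobolev interpolation $\|(\phib,\psib,\mathring\zeta)\|_{L^\infty}\lesssim\|(\phib,\psib,\mathring\zeta)\|^{1/2}\|(\phib,\psib,\mathring\zeta)_x\|^{1/2}$, these bounds give \eqref{L2decay0}--\eqref{decay0} and close the a priori assumption \eqref{apa0}. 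The main obstacle is the careful bookkeeping of the highest-order terms in $\mathring Q_{1x},\mathring Q_{2x}$ involving $\mathring\Psi_{xxx}$, since $\mathring{\tilde K}_2$ controls only $\|\mathring\Psi_{xxx}\|^2,\|\mathring W_{xxx}\|^2$ but not $\|\phib_{xxx}\|^2$: I need the auxiliary $\phib_{xx}$ estimate tuned precisely so that the multiplicative $\ln(2+t)$ losses do not compound across the three derivative levels before Lemma \ref{lem5} is applied.
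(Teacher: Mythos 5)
Your proposal follows the paper's argument in all essential respects: the weighted diagonalized multiplier $(v_1^n\px^k\mathring b_1,\px^k\mathring b_2,v_1^{-n}\px^k\mathring b_3)$ with $n\sim\delta^{-1/2}$, the observation that $R_1$ enters only the momentum equation and therefore pairs only with the acoustic combination (giving \eqref{1.5} and absorption into $\mathring G_0$ at the $(1+t)^{-1}$ rate), the auxiliary $\phib$, $\phib_x$, $\phib_{xx}$ cross-estimates to supply the missing $\Phi$-dissipation, the Poincar\'e-type Lemma~\ref{lem5} to recycle weighted low-frequency integrals into the accumulated dissipation, and the three-level $(1+t)^k$ bootstrap closed by Sobolev interpolation. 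The one inaccuracy is your attribution of the $\ln(2+t)$ factor to the absolute constant in \eqref{poin8}: that constant contributes a single $O(1)$ seed (not a growth per unit time), and Lemma~\ref{lem5} in fact feeds the weighted integral back into the dissipation budget rather than producing logarithmic growth; the $\ln(2+t)$ instead arises from time-integrating the $O(\bar\delta)(1+\tau)^{-1}$ residual sources (visible in \eqref{1.5}, \eqref{b01}, and \eqref{b011}). This is a misstatement of mechanism rather than a gap in strategy, and the remainder of the argument is sound and aligned with the paper's proof.
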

\begin{proof}

Multiplying \eqref{equ-Bk0} by $\mathring{\bar B}^{(k)}=(v_1^n\p_x^k\mathring b_1,\p_x^k\mathring b_2, v_1^{-n}\p_x^k\mathring b_3)$ and then integrating the resulting equation on $\mathbb{R}$, we get
\begin{align}\label{mat1}
\begin{aligned}
&\int_{\mathbb R}\left(\frac{v_1^n}{2}\left|\p_x^k\mathring b_1\right|^2+\frac{1}{2}\left|\p_x^k\mathring b_2\right|^2+\frac{v_1^{-n}}{2}\left|\p_x^k\mathring b_3\right|^2\right)_t+\mathring{\bar B}^{(k)}_xA_4\p_x^{k+1}\mathring B_xdx+\int_{\mathbb R}a_1\left|\p_x^k\mathring b_1\right|^2+a_3\left|\p_x^k\mathring b_3\right|^2dx\\
&=\int_{\mathbb R}-\mathring{\bar B}^{(k)}A_{4x}\p_x^k\mathring B_x+\left[\left(\frac{v_1^n}{2}\right)_t\left|\p_x^k\mathring b_1\right|^2+\left(\frac{v_1^{-n}}{2}\right)_t\left|\p_x^k\mathring b_3\right|^2\right]+\mathring{\bar B}^{(k)}\mathring M_kdx.
\end{aligned}
\end{align}
Similar to \eqref{EKG}-\eqref{Ki}, we denote
\begin{align}\label{EKG0}
\begin{aligned}
	&\mathring{\tilde{E}}_k:=\int_{\R}\frac{v_1^n}{2} |\px^k\mathring b_1|^2+\frac{1}{2}| \px^k\mathring b_2|^2+\frac{v_1^{-n}}{2} |\px^k\mathring b_3|^2dx,\quad \mathring{\tilde{K}}_k:=\int_{\R}\px^{k+1}\mathring B^tA_4\px^{k+1}\mathring Bdx,\\
	& \mathring G_k:=\int_{\R} a_1|\px^k\mathring b_1|^2+ a_3|\px^k\mathring b_3|^2dx.
	\end{aligned}
\end{align}
and 
\begin{align}\label{Ki0}
	\mathring K_i:=\int_{\R}\abs{\px^{i+1}\mathring b_1}^2+\abs{\px^{i+1}\mathring b_2}^2+\abs{\px^{i+1}\mathring b_3}^2dx=O(1)\int_{\R}\abs{\px^{i+1}\mathring\Phi}^2+\abs{\px^{i+1}\mathring\Psi}^2+\abs{\px^{i+1}\mathring W}^2dx,
\end{align} 
for $i=0,1,2$.

Since the estimates of other terms are similar to that in the non-zero initial mass case, we only focus on estimating the terms containing $\mathring A_3$. For $k=0$, 
\begin{align}
\int_{\mathbb R}\mathring{\bar B}^{(0)}L\mathring A_3dx\le C\int_{\mathbb R}\left|\left(\frac{v_1^{-n}}{\lambda_3}\mathring b_3+
\frac{v_1^{n}}{\lambda_3}\mathring b_1\right)\mathring Q_1\right|+\left|\left(v_1^n\mathring b_1+\mathring b_2+v_1^{-n}\mathring b_3\right)\mathring Q_2\right|dx:=\mathring I^1_1+\mathring I^1_2.
\end{align}
By \eqref{Q0} and \eqref{vr}, it yields
\begin{align}
\begin{aligned}
\mathring I^1_1&\le C\int_{\mathbb R}\left|\frac{v_1^{-n}}{\lambda_3}\mathring b_3+
\frac{v_1^{n}}{\lambda_3}\mathring b_1\right| \left|\left(\frac{\mu}{v}-\frac{\mu}{\bar{v}}\right) u_x+\mathring J_1+\frac{R}{\bar{v}} \mathring Y\right|+\left|\frac{v_1^{-n}}{\lambda_3}\mathring b_3+
\frac{v_1^{n}}{\lambda_3}\mathring b_1\right|\left|{R}_1\right|dx\\
&\le C\bar\delta\left(\left\|\mathring\Phi_x\right\|+\mathring{\tilde{K}}_0+\mathring{\tilde{K}}_1\right)+C\bar\delta(1+t)^{-1}\mathring{\tilde E}_0+C\bar\delta \mathring G_0+C\bar\delta(1+t)^{-1},
\end{aligned}
\end{align}
and
\begin{align}
\begin{aligned}
\mathring I^1_2&\le C\int_{\mathbb R}\left|
v_1^n\mathring b_1+\mathring b_2+v_1^{-n}\mathring b_3\right| \left|
\left(\frac{\kappa}{v}-\frac{\kappa}{\bar{v}}\right) \theta_x+\frac{\mu u_x}{v} \mathring\Psi_x-\bar{u}_t \mathring\Psi+\mathring J_2-\frac{\kappa}{\bar{v}} \mathring Y_x-{R}_2+\bar{u} {R}_1\right|dx\\
&\le C\bar\delta\left(\left\|\mathring\Phi_x\right\|+\mathring{\tilde{K}}_0+\mathring{\tilde{K}}_1\right)+C\bar\delta(1+t)^{-1}\mathring{\tilde E}_0
+C\bar\delta(1+t)^{-\frac{3}{2}}.
\end{aligned}
\end{align}
Then we have 
\begin{align}\label{b01}
\int_{\mathbb R}\mathring{\bar B}^{(0)}L\mathring A_3dx\le C\bar\delta\left(\left\|\mathring\Phi_x\right\|+\mathring{\tilde{K}}_0+\mathring{\tilde{K}}_1\right)+C\bar\delta(1+t)^{-1}\mathring{\tilde E}_0+C\bar\delta \mathring G_0
+C\bar\delta(1+t)^{-1}.
\end{align}
For $k=1$, there exists
\begin{align}
\begin{aligned}
\int_{\mathbb R}\mathring{\bar B}^{(1)}\left(\px L\mathring A_3+L\px\mathring A_3\right)dx\le C\int_{\mathbb R}&\left|\left(
v_1^{-n}\px\mathring b_3+
v_1^{n}\px\mathring b_1\right)\bar v_x\mathring Q_1\right|+\left|\left(
v_1^{-n}\px\mathring b_3+
v_1^{n}\px\mathring b_1\right)
\mathring Q_{1x}\right|\\
&+\left|\left(v_1^n\px\mathring b_1+\px\mathring b_2+v_1^{-n}\px\mathring b_3\right)\mathring Q_{2x}\right|dx:=\mathring I^2_1+\mathring I^2_2+\mathring I^2_3.
\end{aligned}
\end{align}
Then we have 
\begin{align}
\begin{aligned}
\mathring I_1^2&\le C\int_{\mathbb R}D_{-\frac{1}{2}}\left|v_1^{-n}\px\mathring b_3+v_1^{n}\px\mathring b_1\right|\left|\mathring Q_1\right|dx\\
&\le C\bar\delta
\mathring K_1
+C\bar\delta(1+t)^{-1}(\mathring{\tilde E}_1+(1+t)^{-1}\mathring{\tilde E}_0)+C\bar\delta\mathring G_1+C\bar\delta(1+t)^{-2},
\end{aligned}
\end{align}
\begin{align}
\begin{aligned}
\mathring I_2^2
\le C\bar\delta(\mathring K_1+\mathring K_2)+C\bar\delta(1+t)^{-1}(\mathring{\tilde E}_1+(1+t)^{-1}\mathring{\tilde E}_0)+C\bar\delta\mathring G_1+C\bar\delta(1+t)^{-2},
\end{aligned}
\end{align}
\begin{align}
\begin{aligned}
\mathring I_3^2
&\le C\bar\delta(\mathring K_1+\mathring K_2)+C\bar\delta(1+t)^{-1}(\mathring{\tilde E}_1+(1+t)^{-1}\mathring{\tilde E}_0)
+C\bar\delta(1+t)^{-\frac{5}{2}},
\end{aligned}
\end{align}
where 
\begin{align}
\int_{\mathbb R}\left|v_1^{-n}\px\mathring b_3+v_1^{n}\px\mathring b_1\right|\left(D_{-\frac{1}{2}}R_1+R_{1x}\right)dx\le C\bar\delta\mathring G_1+C\bar\delta(1+t)^{-2}.
\end{align}
Then one has 
\begin{align}\label{b02}
\begin{aligned}
\int_{\mathbb R}&\mathring{\bar B}^{(1)}\left(\px L\mathring A_3+L\px\mathring A_3\right)dx\\
&\le C\bar\delta(\mathring K_1+\mathring K_2)+C\bar\delta(1+t)^{-1}(\mathring{\tilde E}_1+(1+t)^{-1}\mathring{\tilde E}_0)+C\bar\delta\mathring G_1+C\bar\delta(1+t)^{-2}.
\end{aligned}
\end{align}
Similarly, for $k=2$, we can calculate the following estimates:
\begin{align}\label{b03}
\begin{aligned}
\int_{\mathbb R}&\px\mathring{\bar B}^{(2)}\left(\px L\mathring A_3+L\px\mathring A_3\right)dx\\
&\le C\int_{\mathbb R}\left|\left(
v_1^{-n}\p_x^3\mathring b_3+
v_1^{n}\p_x^3\mathring b_1\right)\bar v_x\mathring Q_1\right|+\left|\frac{1}{\lambda_3}\left(
v_1^{-n}\p_x^3\mathring b_3+
v_1^{n}\p_x^3\mathring b_1\right)
\mathring Q_{1x}\right|\\
&\qquad+\left|\left(v_1^n\p_x^3\mathring b_1+\p_x^2\mathring b_2+v_1^{-n}\p_x^2\mathring b_3\right)\mathring Q_{2x}\right|dx\\
&\le C\bar\delta\mathring K_2+C\bar\delta\sum_{i=0}^2(1+t)^{-(3-i)}\mathring{\tilde E}_i+C\bar\delta^{\frac{1}{2}}\mathring G_2+C\bar\delta^{\frac{1}{2}}(1+t)^{-3},
\end{aligned}
\end{align}
where 
\begin{align}
\begin{aligned}
\int_{\mathbb R}&\left|\left(v_1^{-n}\p_x^3\mathring b_3+v_1^{n}\p_x^3\mathring b_1\right)\bar v_xR_1\right|+\left|\frac{1}{\lambda_3}\left(v_1^{-n}\p_x^3\mathring b_3+v_1^{n}\p_x^3\mathring b_1\right)R_{1x}\right|dx\\
&\le C\int_{\mathbb R}D_{-2}\left(v_1^{-n}\p_x^2\mathring b_3+v_1^{n}\p_x^2\mathring b_1\right)+\bar\delta^{-\frac{1}{2}}D_{-2}\left(v_1^{-(n+1)}\p_x^2\mathring b_3+v_1^{n-1}\p_x^2\mathring b_1\right)dx\\
&\le C\bar\delta^{\frac{1}{2}}\mathring G_2+C\bar\delta^{\frac{1}{2}}(1+t)^{-3}.
\end{aligned}
\end{align}

Collecting \eqref{b01}, \eqref{b02} and \eqref{b03}, one has 
\begin{align}\label{b101}
\begin{aligned}
	\frac{d}{dt}\big(\sum_{i=0}^2\mathring{\tilde{E}}_i\big)+\sum_{i=0}^2(\mathring{\tilde{K}}_i+\mathring G_{i})&\leq C\bar{\delta}^{\frac{1}{2}}(1+t)^{-1}\big(\sum_{i=0}^2\mathring{\tilde{E}}_i\big)+C\bar{\delta}^{\frac{1}{2}}(1+t)^{-1}\\
	&\quad+C\bar{\delta}^{\frac{1}{2}}\norm{\mathring\Phi_x}^2{+C\bar\delta^{\frac{1}{2}}\norm{\mathring\Phi_{xx}}^2+C\bar\delta^{\frac{1}{2}}\mathring K_2
	},
	\end{aligned}
	\end{align}
	\begin{align}\label{b102}
	\begin{aligned}
	\frac{d}{dt}\big(\sum_{i=1}^2\mathring{\tilde{E}}_i\big)+\sum_{i=1}^2(\mathring{\tilde{K}}_i+\mathring G_{i})&\leq C\bar{\delta}^{\frac{1}{2}}(1+t)^{-1}\big(\sum_{i=1}^2\mathring{\tilde{E}}_i+\mathring G_0\big)+C\bar{\delta}^{\frac{1}{2}}(1+t)^{-2}\\
	&\quad+C\bar{\delta}^{\frac{1}{2}}(1+t)^{-2}\mathring{\tilde{E}}_0+C\bar{\delta}^{\frac{1}{2}}\norm{\mathring\Phi_{xx}}^2{+C\bar\delta^{\frac{1}{2}}\mathring K_2}
	,
	\end{aligned}
	\end{align}
	\begin{align}\label{b103}
	\begin{aligned}
	\frac{d}{dt}\mathring{\tilde{E}}_2+\mathring{\tilde{K}}_2+\mathring G_{2}&\leq C\bar{\delta}^{\frac{1}{2}}\sum_{i=0}^
	{2}(1+t)^{-(3-i)}\mathring{\tilde{E}}_{i}+C\bar{\delta}^{\frac{1}{2}}(1+t)^{-3}+C\bar\delta^{\frac{1}{2}}\mathring K_2
	\\
	&\quad+C\bar{\delta}^{\frac{1}{2}}\big[(1+t)^{-2}\mathring G_0+(1+t)^{-1}\mathring G_1\big].
	\end{aligned}
\end{align}

Repeating the estimates in step 2 for system \eqref{sys-Ori0} and \eqref{sys-pert00}, there exist some positive constant $\mathring{\bar C}_{i+1}>1,\mathring{\hat C}_{i+1}>1
$, ${i=0,1,2,}$ such that 
\begin{align}
\mathring E_i=\mathring{\bar C}_{i+1}\mathring{\tilde E}_i+\mathring{\hat C}_{i+1}\int_{\R}\frac{\mu}{\bar{v}}\abs{\px^{i}\phib}^2-
	{\p_x^i\phib\p_x^i\mathring\Psi}
	dx>\int_{\R} \abs{\px^{i}\mathring b_1}^2+\abs{\px^{i}\mathring b_2}^2+\abs{\px^{i}\mathring b_3}^2+\abs{\px^{i}\phib}^2dx.
\end{align}
Then we can obtain
\begin{align}
	&\frac{d}{dt}\big(\sum_{i=0}^2\mathring E_i\big)+\sum_{i=0}^2(\mathring K_i+\mathring G_{i})\leq C\bar{\delta}^{\frac{1}{2}}(1+t)^{-1}\big(\sum_{i=0}^2{\mathring E}_i\big)+C\bar{\delta}(1+t)^{-1},\label{b011}\\
	&\frac{d}{dt}\big(\sum_{i=1}^2{\mathring E}_i\big)+\sum_{i=1}^2(\mathring K_i+\mathring G_{i})\leq C\bar{\delta}^{\frac{1}{2}}(1+t)^{-1}\big(\sum_{i=1}^2{\mathring E}_i+\mathring G_0\big)+C\bar{\delta}(1+t)^{-2}+C\bar{\delta}^{\frac{1}{2}}(1+t)^{-2}{E}_0,\label{b022}\\
	&\frac{d}{dt}{\mathring E}_2+{\mathring K}_2+G_{2}\leq C\bar{\delta}^{\frac{1}{2}}\sum_{i=0}^2(1+t)^{-(3-i)}{\mathring E}_{i}+C\bar{\delta}(1+t)^{-3}+C\bar{\delta}^{\frac{1}{2}}\big[(1+t)^{-2}\mathring G_0+(1+t)^{-1}\mathring G_1\big].\label{b033}
\end{align}

{\bf The proof of Theorem \ref{mt0}.} 
Integrating \eqref{b011} on $[0,t]$ and using Lemma \ref{lem5}, we get
\begin{align}
\sum_{i=0}^2\mathring E_i +\int_0^t\sum_{i=0}^2(\mathring K_i+\mathring G_{i})d\tau\le C(\e^2+\bar\delta^{\frac{1}{2}})\ln(2+t).
\end{align}
Multiplying \eqref{b022} by $(1+t)$ and Integrating the resulting inequality on $[0,t]$, one has
\begin{align}\label{b111}
\sum_{i=1}^2{\mathring E}_i\le C(\e^2+\bar\delta^{\frac{1}{2}})(1+t)^{-1}
\ln(2+t),\ \ \int_0^t(1+\tau)\sum_{i=1}^2(\mathring K_i+\mathring G_{i})d\tau\le C(\e^2+\bar\delta^{\frac{1}{2}})
\ln(2+t).
\end{align}
Multiplying \eqref{b033} by $(1+t)^2$ and Integrating the resulting inequality on $[0,t]$, one has
\begin{align}
(1+t)^2{\mathring E}_2+\int_0^t(1+\tau)^2({\mathring K}_2+G_{2})d\tau
\le C(\e^2+\bar\delta^{\frac{1}{2}})
\ln(2+t),
\end{align}
it yields
\begin{align}\label{b110}
{\mathring E}_2\le C(\e^2+\bar\delta^{\frac{1}{2}})(1+t)^{-2}\ln(2+t).
\end{align}
Then by \eqref{WZEta0}, \eqref{b111} and \eqref{b110}, it holds that
\begin{align}
\|(\phib,\psib,\mathring\zeta)\|_{L^\infty}\le C(\e+\bar\delta^{\frac{1}{4}})(1+t)^{-\frac{3}{4}}\ln^{\frac{1}{2}}(2+t).
\end{align}
Hence, we get the convergence rate of \eqref{L2decay0}-\eqref{decay0} and close the a priori estimates \eqref{apa0}. 
The proof of Theorem \ref{mt0} is completed.
\end{proof}

\smallskip
\noindent {\bf Acknowledgements.} Lingda Xu is supported by the Research Centre for Nonlinear Analysis at The Hong Kong Polytechnic University.

\bigskip

\end{document}